\newcommand{\norm}[1]{\|#1\|}
\newcommand{\eoproof}{\hfill$\square$}
\newtheorem{algorithm}{\normalfont\textsc{Algorithm}}
\begin{document}

\title{Sequential Convex Programming Methods for Solving Nonlinear Optimization Problems with DC constraints}



\author{Tran Dinh Quoc \and Moritz Diehl}


\institute{ Tran Dinh Quoc \and Moritz Diehl \at
            Department of Electrical Engineering (ESAT-SCD) and Optimization in Engineering Center (OPTEC), K.U. Leuven, Kasteelpark Arenberg 10, B-3001 Leuven, Belgium.\\
            \email{\{quoc.trandinh, moritz.diehl\}@esat.kuleuven.be}
}

\date{Received: date / Accepted: date}

\maketitle

\begin{abstract}
This paper investigates the relation between sequential convex programming (SCP) as, e.g., defined in \cite{Quoc2009b} and DC (difference of two convex functions) programming. 
We first present an SCP algorithm for solving nonlinear optimization problems with DC constraints and prove its convergence. Then we combine the proposed algorithm with a relaxation technique to handle inconsistent linearizations. Numerical tests are performed to investigate the behaviour of the class of algorithms.
\keywords{Sequential convex programming \and DC constraint \and relaxation technique \and nonconvex optimization.}
\end{abstract}

\section{Introduction}\label{sec:intro}
Let $\Gamma_0(\mathbf{R}^n)$ denote the set of all proper lower semi-continuous convex functions from $\mathbf{R}^n$ to $\mathbf{R}$, and $\mathcal{DC}(\mathbf{R}^n) := \Gamma_0(\mathbf{R}^n)-\Gamma_0(\mathbf{R}^n)$ denote the set of DC functions on $\mathbf{R}^n$.
We are interested in the following nonconvex optimization problem:
\begin{equation}\label{eq:nlp_prob}
\left\{\begin{array}{cl}
\displaystyle\min_{x\in\mathbf{R}^n} &f(x) \\
\textrm{s.t.} &g(x) \leq 0,\\
              &x\in\Omega,
\end{array}\right.\tag{P}
\end{equation}
where $f : \mathbf{R}^n  \to \mathbf{R}$ is convex, $\Omega$ is a nonempty closed convex subset in $\mathbf{R}^n$, and  $g: \mathbf{R}^n \to \mathbf{R}^m$ with $g=(g_1,\dots, g_m)^T$ and $g_i$ ($i=1,\dots, m$) belongs to $\mathcal{DC}(\mathbf{R}^n)$. We refer to $g(x)\leq 0$ as DC constraints.
Let us denote by $D:=\{x\in\Omega ~:~ g(x)\leq 0\}$ the feasible set of \eqref{eq:nlp_prob} and $\textrm{int}D$ the set of interior points of $D$.

Problems of the form \eqref{eq:nlp_prob} have been studied by many researchers in theory and applications (see, e.g., \cite{An1999,Hiriart-Urruty1986,Hiriart-Urruty1993,Horst1999,Horst2000,Tuy1997} and the references quoted therein). However, the methods for solving \eqref{eq:nlp_prob} that exploit DC structures are usually global optimization techniques. These approaches are not applicable to problems with a high dimension. 
In this paper, we are interested in finding local minimizers only.

The class of DC functions is sufficiently rich to deal with many practical problems.
It is well-known \cite{Horst1999,Horst2000} that the set of DC functions defined on a compact convex set of $\mathbf{R}^n$ is dense in the set of continuous functions on this set. Therefore, in principle, every continuous function can be approximated by a DC function with any desired precision. Moreover, every $C^2$-function defined on a compact set is a DC function \cite{Hiriart-Urruty1986} that includes the smooth cases of \eqref{eq:nlp_prob}. Many practical problems can be reformulated in the form of \eqref{eq:nlp_prob} (see, e.g., \cite{Horst2000}). Although DC representations are available for important function classes, finding such a representation for an arbitrary DC function is still a hard problem.

This paper investigates the relation between SCP methods \cite{Lewis2008,Quoc2009b} and DC programming \cite{An2005,An1999,Pham1998}. Both families of methods address the local solution of nonconvex optimization problems via an iteration based on convex subproblems.

\subsection{DC programming}
DC programming algorithms (DCA) for solving \eqref{eq:nlp_prob} have been introduced by Pham \cite{An2005,An1999,Pham1998}. The original DCA is supposed to solve convex constrained DC programs. To handle DC constraints, penalty functions have been used  \cite{An1999} and then DCA is applied to the penalized problem for a fixed penalty parameter.
Yuillie and Rangarajan in \cite{Yuille2003} proposed a method for solving smooth DC programs that is called the concave-convex procedure (CCCP), a variant of DCA applied to smooth DC programs \cite{Sripertumbudur2007}.
The authors in \cite{Sriperumbudur2009} further investigated the global convergence of the CCCP method.
DCA as well as CCCP have been widely applied in many practical problems (see, e.g. \cite{An2005,Sripertumbudur2007,Yuille2003}).
It is well-known that the use of penalty functions in DC programming with DC constraints introduces conservatism and might lead to excessively short steps.

One particular variant of DC programming that keeps the DC constraints in the problem was considered in \cite{Smola2005}. This again leads to possible conservatism or even to infeasibility of the subproblems (which might be overcome by relaxation techniques). These methods have not become very popular due to these problems and their combination with exact penalties was never properly investigated.

It is the aim of this paper to improve and investigate the numerical behaviour of these algorithms and show that they can be interpreted as a special case of SCP methods. 

\subsection{Sequential Convex Programming}
In \cite{Quoc2009b}, a generic algorithm framework for solving nonlinear optimization problems with partially convex structure was proposed which is called \textit{sequential convex programming} (SCP). 
The main idea of SCP methods is to convexify the nonconvex part and preserve the remaining convexity in the resulting subproblems at each iteration. 
Under mild assumptions, the local convergence of the SCP methods was proved. The rate of local convergence is linear.

To the family of SCP methods belong such classical algorithms as the constrained or unconstrained Gauss-Newton methods as well as sequential linear programming (SLP) or sequential quadratic programming (SQP) with convex subproblems \cite{Garces2008,Kanzow2005,Nocedal2006}. All these methods are based on linearization of nonconvex constraints or objective functions, and are widely used in applications of nonlinear optimization, in particular, in parameter estimation (constrained Gauss-Newton \cite{Bock1986} and nonlinear model predictive control \cite{Diehl2002b,Diehl2009c}).

When DC constraints are treated within an SCP framework, it is possible to only linearize the concave parts. This can be interpreted as a special case of SCP, which offers a favourable feature: namely that globalization strategies like line search or trust-region methods are not needed and full SCP steps can always be taken. When feasibility of the subproblems becomes an issue, which is always the case for nonlinear equality constraints, we propose to relax the subproblems using an exact $L_1$-penalty function and investigate the behaviour of this relaxed SCP algorithm. 
We show through an example that it can lead to less conservative convex subproblems than the standard approach of using unconstrained DC programming with penalty functions.

\subsection{Notation and definitions}
Throughout this paper, we use $\mathbf{R}^m_{+}$ for the set of $m$-nonnegative vectors and $\mathbf{R}_{+}$ (resp., $\mathbf{R}_{+}$) for the set of nonnegative (resp., positive) numbers. 

A function $f:\mathbf{R}^n\to\mathbf{R}$ is called $\rho^f$-convex on a convex subset $X$ of $\mathbf{R}^n$ with $\rho^f\in\mathbf{R}_{+}$ if for all $x,y\in X$ and $t\in [0,1]$  the inequality $f(tx+(1-t)y) \leq tf(x) + (1-t)f(y) - \frac{\rho^f}{2}t(1-t)\norm{x-y}^2$ holds. 
If $\rho^f=0$ then $f$ is convex. Otherwise, $f$ is strongly convex with the parameter $\rho^f > 0$.
 
Let us assume that $f$ is a DC function such that $f = f_1-f_2$, then it is trivial to see that $f = (f_1 + \frac{\rho}{2}\norm{\cdot}^2) - (f_2 + \frac{\rho}{2}\norm{\cdot}^2)$ for any given $\rho>0$. Therefore, without loss of generality, we can find a DC decomposition $(f_1, f_2)$ of $f$ such that $f_1$ and $f_2$ are strongly convex. 
We also use the notation $\textrm{dom}f := \{x\in X~|~ f(x) <+\infty \}$ for the domain of a convex function $f$. For $x\in\textrm{dom}f$, the symbol $\partial f(x)$ denotes the exact subdifferential of $f$ at $x$, i.e., $\partial f(x) := \{ \xi\in\mathbf{R}^n ~|~ f(y) \geq f(x) + \xi^T(y-x) , ~\forall y\in X \}$. A convex function $f$ is said to be subdifferentiable at $x\in\textrm{dom}f$ if $\partial f(x) \neq\emptyset$. A vector $\xi\in\partial f(x)$ is called a subgradient of $f$ at $x$.  

\subsection{Optimality condition}
Suppose that $(u, v)$ is an arbitrary DC decomposition of $g$.    
Let us define $L(x,\lambda) := \lambda_0f(x) + \lambda^T[u(x)-v(x)]$ the Lagrange function of problem \eqref{eq:nlp_prob}. The generalized F. John condition of \eqref{eq:nlp_prob} is expressed as follows \cite{Clarke1990}:
\begin{equation}\label{eq:kkt_cond}
\begin{cases}
0 \in \lambda_0\partial f(x) + \sum_{i=1}^m\lambda_i[\partial u_i(x)- \partial v_i(x)] + N_{\Omega}(x),\\
0 \neq (\lambda_0, \lambda) \geq 0, ~u(x)-v(x) \leq 0, ~\lambda^T[u(x) - v(x)] = 0,
\end{cases} 
\end{equation}
where $\partial f(x)$, $\partial u_i(x)$ and $\partial v_i(x)$ ($i=1,\dots, m$) are the subdifferentials of $f$, $u_i$ and $v_i$ at $x$, respectively. The multivalued mapping $N_{\Omega}$ is the normal cone of $\Omega$ at $x$ defined by:
\begin{equation}\label{eq:normal_cone}
N_{\Omega}(x) := \begin{cases}
\{ w\in\mathbf{R}^n ~|~ w^T(y-x)\leq 0,~ y\in\Omega\} ~&\textrm{if}~ x\in\Omega,\\
\emptyset ~&\textrm{otherwise}. 
\end{cases} 
\end{equation}
Note that the first line of \eqref{eq:kkt_cond} includes implicitly that $x\in\Omega$.
If $(x^{*},\lambda^{*}_0, \lambda^{*})$ satisfies \eqref{eq:kkt_cond} then $x^{*}$ is called a stationary point and $(\lambda_0^{*}, \lambda^{*})$ is the corresponding multiplier of \eqref{eq:nlp_prob}. 

Since problem \eqref{eq:nlp_prob} is nonconvex, a stationary point might not be a local minimizer. However, we will show later that under the calmness constraint qualification, the first order necessary condition for \eqref{eq:nlp_prob} still holds.
   
We consider the following parametric optimization problem: 
\begin{equation}\label{eq:para_nlp}
V(\delta) := \inf\big\{ f(x)~|~g(x) \leq \delta, ~x\in\Omega\big\} \tag{$\textrm{P}(\delta)$}, 
\end{equation}
where the perturbation (or parameter) $\delta$ belongs to a neighborhood $U_{\varepsilon}\subset\mathbf{R}^m$ of the origin. It is trivial that $\textrm{P}(0)\equiv\textrm{P}$. Let $x^{*}$ solve \eqref{eq:nlp_prob}. Problem \eqref{eq:nlp_prob} is said to be calm at $x^{*}$ (in the sense of Clarke's calmness constraint qualification \cite{Clarke1990}) if there exist a neighborhood $U_{\varepsilon}$ of the origin, $X_{\varepsilon}$ of $x^{*}$ and a positive number $\tau$ such that for all $\delta\in U_{\varepsilon}$ and $x\in X_{\varepsilon}$ that are feasible for \ref{eq:para_nlp}, one has $f(x) - f(x^{*}) + \tau\norm{\delta}\geq 0$. The characterizations of calmness have been investigated in the literature (see, e.g., \cite{Clarke1990,Klatte2001,Rockafellar1997}). The optimality conditions for DC programs with DC constraints have been studied in \cite{Laghdir2005}. 

If $v_i$ $(i=1,\dots, m)$ is continuously differentiable on $\mathbf{R}^n$ then, under the calmness of \eqref{eq:nlp_prob} at a local solution $x^{*}$, without loss of generality, we can assume that the multiplier $\lambda_0 = 1$. Thus the F. John condition \eqref{eq:kkt_cond} collapses to the (generalized) KKT condition. With $\lambda_0=1$, the point $(x^{*}, \lambda^{*})$ satisfying \eqref{eq:kkt_cond} is called a KKT point.
In particular, if $f$, $u_i$ and $v_i$ $(i=1,\dots, m)$ are continuously differentiable on $\mathbf{R}^n$, and $\Omega$ is the whole space, then the condition \eqref{eq:kkt_cond} collapses to the classical KKT condition in smooth nonlinear optimization \cite{Nocedal2006}. Under the Mangasarian-Fromowitz constraint qualification, the first order necessary condition corresponding to \eqref{eq:kkt_cond} holds for \eqref{eq:nlp_prob}. 
The following theorem shows that the first order necessary condition for problem \eqref{eq:nlp_prob} still holds.

\begin{theorem}\label{th:FONC}
Suppose that $f\in\Gamma(\mathbf{R}^n)$ and $(u, v)$ is a DC decomposition of $g$ such that $v$ is continuously differentiable on $\mathbf{R}^n$. Let $x^{*}$ be a local minimum of \eqref{eq:nlp_prob} such that \eqref{eq:nlp_prob} is calm at $x^{*}$. Then there exists a multiplier $\lambda^{*}\in\mathbf{R}^m$ such that $(x^{*},\lambda^{*})$ is a solution to the KKT system \eqref{eq:kkt_cond}.
\end{theorem}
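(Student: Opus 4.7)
The plan is to use the calmness hypothesis to convert \eqref{eq:nlp_prob} into an exact $\ell_1$-penalty problem and then apply convex subdifferential calculus to the resulting optimality condition. First I would show that calmness implies $x^{*}$ is a local minimizer over $\Omega$ of
\[
F(x) := f(x) + \tau \sum_{i=1}^m \max\{g_i(x), 0\}.
\]
For $x$ close to $x^{*}$ with $x\in\Omega$, the vector $\delta := [g(x)]_{+}$ lies in $U_\varepsilon$ and $x$ is feasible for \eqref{eq:para_nlp}. Choosing $\|\cdot\|$ to be the $\ell_1$-norm in the calmness inequality $f(x) - f(x^{*}) + \tau\|\delta\| \geq 0$ rearranges exactly to $F(x) \geq F(x^{*})$.

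Next, the DC structure of $g$ combined with smoothness of $v$ lets me split $F$ into a convex-plus-smooth function via the identity $\max\{u_i - v_i, 0\} = \max\{u_i, v_i\} - v_i$. This gives $F = \Phi + \Psi$ with
\[
\Phi(x) := f(x) + \tau \sum_{i=1}^m \max\{u_i(x), v_i(x)\}, \qquad \Psi(x) := -\tau \sum_{i=1}^m v_i(x),
\]
where $\Phi$ is convex and $\Psi$ is continuously differentiable. The local optimality of $x^{*}$ for $F$ on $\Omega$ then yields the inclusion $0 \in \partial \Phi(x^{*}) + \nabla \Psi(x^{*}) + N_\Omega(x^{*})$.

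The KKT multipliers would then be extracted via the subdifferential formula for the pointwise maximum of two convex functions. For inactive indices (with $g_i(x^{*}) < 0$), $\partial \max\{u_i, v_i\}(x^{*}) = \{\nabla v_i(x^{*})\}$ and its contribution cancels the corresponding term of $\nabla \Psi(x^{*})$. For active indices (with $g_i(x^{*}) = 0$), the subdifferential equals $\{\alpha_i \xi_i + (1-\alpha_i)\nabla v_i(x^{*}) : \alpha_i\in[0,1],\ \xi_i\in\partial u_i(x^{*})\}$, contributing a term of the form $\tau\alpha_i[\xi_i - \nabla v_i(x^{*})]$. Setting $\lambda_i^{*} := \tau\alpha_i$ on active indices and $\lambda_i^{*} := 0$ otherwise provides nonnegative multipliers satisfying $\lambda_i^{*} g_i(x^{*}) = 0$, and the inclusion above becomes precisely \eqref{eq:kkt_cond} with $\lambda_0 = 1$.

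The hard part is the exact-penalty step: rigorously passing from calmness — a statement about feasible perturbed pairs — to a pointwise unconstrained inequality $F(x) \geq F(x^{*})$ valid on a whole neighbourhood of $x^{*}$ in $\Omega$, and reconciling the norm appearing in calmness with the $\ell_1$-penalty (any two norms on $\mathbf{R}^m$ are equivalent, so this is a matter of absorbing the equivalence constant into $\tau$). Once the penalty reformulation is in hand, the remainder is a routine application of convex subdifferential calculus whose only essential input is that the smoothness of $v$ collapses $\partial v_i(x^{*})$ to the singleton $\{\nabla v_i(x^{*})\}$.
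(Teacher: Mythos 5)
Your proof is correct, but it follows a genuinely different route from the paper's. The paper's argument is essentially a two-step citation: it observes that smoothness of $v_i$ gives $\partial v_i=\{\nabla v_i\}$, computes the Clarke subdifferential $\partial^c g_i=\partial u_i-\nabla v_i$ via the sum rule for a convex plus a $C^1$ function, and then invokes Clarke's multiplier rule for calm problems (Proposition 6.4.4 in \cite{Clarke1990}) to produce $\lambda^{*}$ with $\lambda_0=1$. You instead unpack that cited result: calmness gives the exact $\ell_1$-penalty inequality $F(x)\geq F(x^{*})$ locally on $\Omega$ (the norm-equivalence remark and the continuity argument ensuring $[g(x)]_{+}\in U_{\varepsilon}$ are exactly the points to make), the identity $\max\{u_i-v_i,0\}=\max\{u_i,v_i\}-v_i$ splits $F$ into a finite-valued convex part plus a $C^1$ part so that only convex subdifferential calculus (Moreau--Rockafellar and the max-rule) is needed, and the multipliers come out explicitly as $\lambda_i^{*}=\tau\alpha_i$ on active indices. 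What your approach buys is a self-contained, elementary derivation that avoids the Clarke machinery entirely and, as a bonus, exhibits the bound $0\leq\lambda_i^{*}\leq\tau$ relating the multipliers to the calmness modulus; what the paper's approach buys is brevity and the fact that it works verbatim even if one weakens the smoothness of $v$ to local Lipschitz regularity of $g$. The one step you gloss over --- that a local minimizer of $\Phi+\Psi$ over $\Omega$ with $\Phi$ convex and $\Psi$ smooth satisfies $0\in\partial\Phi(x^{*})+\nabla\Psi(x^{*})+N_{\Omega}(x^{*})$ --- is standard (linearize $\Psi$ along segments $x^{*}+t(y-x^{*})$ and pass to the limit), but you should state it as a lemma rather than treat it as automatic, since $F$ itself is not convex.
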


\begin{proof}
Note that if a function $\varphi$ is continuously differentiable (resp., convex) then the Clarke subdifferential coincides with its gradient (resp., its convex subdifferential) \cite{Clarke1990}[Proposition 2.2.7]. 
Since $v_i(\cdot)$ is convex and continuously differentiable on $\mathbf{R}^n$, it implies that $\partial v_i = \{\nabla v_i\}$ for all $i=1,\dots, m$.
On the other hand, since $u_i$ is subdifferentiable on $\mathbf{R}^n$, we have $\partial^c g_i = \partial^c( u_i-v_i ) = \partial u_i + \nabla(-v_i ) = \partial u_i - \nabla v_i$, where $\partial^c g_i$ is the Clarke subdifferential of $g_i$ ($i=1,\dots,m$) \cite{Clarke1990}. Applying Proposition 6.4.4 in \cite{Clarke1990} we obtain the conclusion of the theorem.
\eoproof 
\end{proof}

The rest of the paper is organized as follows. Section~\ref{sec:examples} presents two motivating examples. A variant of the SCP algorithm for solving \eqref{eq:nlp_prob} is presented in Section~\ref{sec:SCP_using_inner_approx}. Then its global convergence is investigated in Section~\ref{sec:convergence}.
A relaxation technique is proposed in Section~\ref{sec:SCP_penalty} to handle  possibly inconsistent linearizations. Computational tests are performed in the last section to demonstrate the behaviour of the class of algorithms. 

\section{Motivating examples}\label{sec:examples}
There are many practical problems that can be conveniently reformulated in the form of \eqref{eq:nlp_prob} such as mathematical programs with complementarity constraints, bridge location problems, design centering problems, location problems, packing problems, optimization over efficient sets, trust-region subproblems in SQP algorithms, and nonconvex quadratically constrained quadratic programming problems (see, e.g., \cite{Horst1999,Pham1998}).
For motivation, we present here two examples. The first example originates from optimal control of a bilinear system and the second one is a mathematical programming problem with complementarity constraints.

\subsection{Nonlinear model predictive control (NMPC) of a bilinear system}\label{ex:motivating_exam1}
The optimization problem resulting from NMPC of a bilinear dynamic system has the following form:  
\begin{equation}\label{eq:example_01}
\left\{\begin{array}{cl}
\displaystyle \min_{x, u} & F_0(x, u):= \frac{1}{2}\sum_{k=0}^{H_p-1}[x_k^TW^k_xx_k + u_k^TW^k_uu_k] + \frac{1}{2}x_{H_p}^TW_ex_{H_p} \\
\textrm{s.t.} & x_{k+1} = Ax_k + B[x_k, u_k] + Cu_k, ~ k=0,\dots, H_p-1, \tag{NMPC}\\
& x_0 = x_{\textrm{init}},\notag\\
& \underline{x}_k \leq x_k \leq \bar{x}_k, ~k=0,\dots, H_p, \notag\\ 
&\underline{u}_k \leq u_k \leq \bar{u}_k, ~k=0,\dots, H_p-1, \notag\\
& x_{H_p}^TW_ex_{H_p} \leq r_f,
\end{array}\right.
\end{equation}
where $W_x^k$, $W_u^k$, $W_e$ are the weighting matrices; $A, C$ are given consistent matrices; $x_{\textrm{init}}$ is a given initial state; $\underline{x}_k$, $\bar{x}_k$, $\underline{u}_k$, $\bar{u}_k$ are lower and upper bounds on the variables $x_k$ and $u_k$, respectively; $r_f>0$ is the radius of the terminal region; and $B[x_k, u_k]$ denotes a bilinear form of $x_k$ and $u_k$.  
    
Introducing a new variable $w := (x_0^T, x_1^T, \dots, x_{H_p}^T, u_0^T, \dots, u_{H_p-1}^T)^T \in \mathbf{R}^{n_w}$ with $n_w=(H_p+1)n_x+H_pn_u$, the objective function of \eqref{eq:example_01} can be rewritten as $F_0(w) = \frac{1}{2}w^THw$, where $H$ is a symmetric positive semidefinite matrix with $W^k_x$, $W^k_u$ and $W^k_e$ on the diagonal block. 
It is known that a given bilinear form is always associated with a quadratic form.  
Therefore, the discrete time bilinear dynamic system $x_{k+1} = Ax_k + B[x_k, u_k] + Cu_k$ ~($k=0,\dots, H_p-1$) can be reformulated as:
\begin{equation}\label{eq:dynamic_system}
w^TP_iw + q_i^Tw + r_i = 0, ~~ i=1,\dots, m, 
\end{equation}
where $m := H_pn_x$, $P_i$ is a given symmetric indefinite matrix, $q_i\in\mathbf{R}^{n_w}$ and $r_i \in \mathbf{R}$ ($i=1,\dots, m$). 
Any symmetric indefinite matrix $P_i$ can be decomposed in such a form $P_i := P^1_i - P^2_i$, where $P^1_i$ and $P^2_i$ are two symmetric positive semidefinite matrices (e.g., using spectral decomposition).
Using two different DC decompositions of $P_i$ and choosing $q_i^1$, $q^2_i$, $\tilde q_i^1$, $\tilde q^2_i$, $r_i^1$, $r^2_i$, $\tilde r_i^1$, $\tilde r^2_i$ such that $q_i = q_i^1-q^2_i= \tilde q_i^1-\tilde q^2_i$, $r_i = r_i^1-r^2_i= \tilde r_i^1-\tilde r^2_i$, respectively, the equality constraints \eqref{eq:dynamic_system} can be rewritten as
\begin{equation}\label{eq:dc_decp_for_BLSys}
\begin{cases}
[(w^TP_i^1w + (q^1_i)^Tw + r^1_i] - [w^TP_i^2w + (q_i^2)^Tw + r_i^2] \leq 0,\\
[(w^T\tilde P_i^2w + (\tilde q^2_i)^Tw + \tilde r^2_i] - [w^T\tilde P_i^1w + (\tilde q_i^1)^Tw + \tilde r_i^1] \leq 0, 
\end{cases}
\end{equation}
for all $i=1,\dots, m$.
Hence, problem \eqref{eq:example_01} is reformulated in the form of \eqref{eq:nlp_prob}.
Note that $P^j_i=\tilde{P}_i^j$ ($j=1,2$) is a possible choice in the formula \eqref{eq:dc_decp_for_BLSys}.

\subsection{Mathematical programs with complementarity constraints}\label{ex:motivating_exam2}
Mathematical programs with equilibrium constraints (MPEC) have been studied widely and have many applications in economic models, shape optimization, transportation, network design, and data mining. In this example, we particularly consider the following mathematical programming problem with complementary constraints:
\begin{equation}\label{eq:example_02}
\left\{\begin{array}{cl}
\displaystyle \min_{x\in\mathbf{R}^n, y\in\mathbf{R}^m} & f(x,y)\\ 
\textrm{s.t.} & (x, y) \in S, \tag{MPCC}\\
& x\geq 0,~ Cx + Dy + e \geq 0,\\
& x^T(Cx + Dy + e) = 0,
\end{array}\right.
\end{equation}
where $f:\mathbf{R}^n\times \mathbf{R}^m \to\mathbf{R}$ is convex, $S\subseteq\mathbf{R}^{n+m}$ is a nonempty closed convex set, $e\in\mathbf{R}^n$,  and $C$, $D$ are two given matrices of consistent dimensions. 

Theory and methods for \eqref{eq:example_02} have been developed intensively in recent years (see, e.g., \cite{Facchinei1999,Luo1996,Outrata1999} and the references quoted therein). 
The main difficulty of this problem is the complementarity constraints in the two last lines of \eqref{eq:example_02}. These constraints lead to nonconvexity and loss of constraint qualification of the problem. 

Introducing a slack variable $z$, the complementarity constraint can be reformulated as:
\begin{equation}\label{eq:example_02cc}
Cx + Dy - z + e = 0, ~ x\ge 0,~ z\geq 0, ~ x^Tz = 0.
\end{equation}
Since $x\geq 0$ and $z\geq 0$, the constraint $x^Tz = 0$ is equivalent to $x^Tz\leq 0$. Using the expression $2x^Tz = \norm{x}^2 + \norm{z}^2 - \norm{x-z}^2$, we can rewrite the condition $x^Tz\leq 0$ as a DC constraint:
\begin{equation}\label{eq:dc_con2}
u(x, z) - v(x, z) \leq 0, 
\end{equation}
where $u(x, z) := \norm{(x, z)}^2$ and $v(x, z) := \norm{x-z}^2$ that are convex.
Problem \eqref{eq:example_02} is now reformulated  in the form of \eqref{eq:nlp_prob}.
 
For an MPEC problem, by using the KKT condition for the equilibrium constraint (low level problem), we can transform this problem to the form (MPCC) (see \cite{Facchinei1999}). Then, by the same technique as before, we obtain a DC formulation for the equilibrium constraint.

\section {Sequential convex programming algorithm with DC constraints}\label{sec:SCP_using_inner_approx}
In this section, we present an algorithm for solving problem \eqref{eq:nlp_prob} which we might call \textit{sequential convex programming with DC constraints}. 
Let us assume that $(u, v)$ is a DC decomposition of $g$, i.e.,
\begin{equation}\label{eq:dc_decomp} 
g(x) = u(x) - v(x). 
\end{equation}
For a given point $x^k\in\Omega$, we take an arbitrary matrix $\Xi^k \in \partial v(x^k)$, where the multivalued mapping $\partial v(x^k) := (\partial v_1(x^k)^T, \dots, \partial v_m(x^k)^T)^T$ with $\partial v_i(x^k)$ ($i=1,\dots,m$) is the subdifferential of the convex function $v_i$ at $x^k$. We will refer to $\Xi^k$ as a subgradient matrix of $v$ at $x^k$.
Consider the following convex problem:
\begin{equation}\label{eq:convex_subprob}
\left\{\begin{array}{cl}
\displaystyle\min_{x\in\mathbf{R}^n} &f(x) \\
\textrm{s.t.} &u(x) - v(x^k) - \Xi^k(x-x^k) \leq 0, \tag{\textrm{P}($x^k$)}\\
& x\in\Omega.
\end{array}\right.
\end{equation}
Since \ref{eq:convex_subprob} is convex, under the Slater constraint qualification
\begin{equation}
\text{ri}(\Omega)\cap\left\{x : u(x) - v(x^k) - \Xi^k(x-x^k) < 0 \right\} \neq\emptyset,
\end{equation}
where $\text{ri}(\Omega)$ is the set of relative interior points of the convex set $\Omega$, any global solution $x^{k+1}$ of \ref{eq:convex_subprob} is characterized as a KKT point of \ref{eq:convex_subprob}. In the following algorithm, we assume that the convex subproblem \ref{eq:convex_subprob} is solvable for given $x^k$ and $\Xi^k$.

A generic framework of the \textit{sequential convex programming algorithm with DC constraints} (SCP-DC) can be described as follows:

\noindent\rule[1pt]{\textwidth}{1.0pt}{~~}
\begin{algorithm}\vskip -0.3cm\label{alg:A1}{~}\end{algorithm}
\vskip -0.4cm
\noindent\rule[1pt]{\textwidth}{0.5pt}
\noindent{\bf Initialization:} Take an initial point $x^0$ in $\Omega$. Set $k:=0$.\\
\noindent{\bf Iteration $k$:} For a given $x^k$, execute the three steps below:
\begin{itemize}
\item[]\textit{Step 1}: Compute a subgradient matrix $\Xi^k \in \partial v(x^k)$.
\item[]\textit{Step 2}: Solve the convex subproblem \ref{eq:convex_subprob} to get a solution $x^{k+1}$ and the corresponding multiplier $\lambda^{k+1}$.
\item[]\textit{Step 3}: If $\norm{x^{k+1} - x^k} \leq \varepsilon$ with a given tolerance $\varepsilon>0$, then stop. Otherwise, increase $k$ by $1$ and go back to Step 1.
\end{itemize}
\vskip-0.3cm
\noindent\rule[1pt]{\textwidth}{1.0pt}
At Step 1 of Algorithm \ref{alg:A1}, a subgradient matrix $\Xi^k$ of $v$ at $x^k$ must be computed. If $v_i$ ($i=1,\dots,m$) has a simple form, $\Xi^k$ can be computed explicitly. Otherwise, a convex problem needs to be solved. If $v$ is differentiable at $x^k$ then $\partial v(x^k)$ is identical to the Jacobian matrix of $v$ at $x^k$, i.e. $\partial v(x^k) = \{\nabla v(x^k)\}$.

The cost of finding an initial point $x^0\in\Omega$ depends on the structure of $\Omega$. It can be computed explicitly if $\Omega$ is simple. Otherwise, a convex problem should be solved. The projection methods (onto $\Omega$) can be also used in this case. 

\begin{remark}\label{re:quadratic case}
If the objective function $f$ of \eqref{eq:nlp_prob} is linear (resp., quadratic) then:
\begin{itemize}
\item[i)] If the function $u$ is linear then subproblem \ref{eq:convex_subprob} is linear (resp., quadratic).
\item[ii)]If the function $u$ is quadratic then \ref{eq:convex_subprob} is a quadratically constrained linear (resp., quadratic) programming problem. This problem can be reformulated as a second order cone programming or semidefinite programming problem \cite{Boyd2004}. 
\end{itemize}
\end{remark}

DC decomposition of the function $g$ plays a crucial role in Algorithm \ref{alg:A1}. A suitable DC decomposition may ensure that the convex subproblem \ref{eq:convex_subprob} is solvable. Moreover, it might make \ref{eq:convex_subprob} easy to solve, and help Algorithm \ref{alg:A1} to reach a KKT point of \eqref{eq:nlp_prob} (e.g., $u$ and $v$ have small strongly convex parameters).
The following small example shows the behaviour of Algorithm \ref{alg:A1} using two different DC decompositions.
\begin{equation}\label{eq:small_exam}
\left\{\begin{array}{cl}
\displaystyle\min_{x\in\mathbf{R}^2}& f(x) := -4x_1 + x_2 \\
\textrm{s.t.} &g(x) := x_1^2-x_2^2 - 4\leq 0,\\
& x \in \Omega := [-3, 3]\times [-2, 2]. 
\end{array}\right.
\end{equation}
The constraint $x_1^2-x_2^2-4 \leq 0$ is a DC constraint. Hence, for a given tolerance $\varepsilon = 10^{-5}$ and a starting point $x^0=(0,0)^T$, if we choose $(u,v)$ with $u(x) := x_1^2$ and $v(x) := x_2^2$ for the DC decomposition of $g$ (Case 1) then Algorithm \ref{alg:A1} converges to the global solution after $2$ iterations.
If we choose $u(x) := x_1^2 + x_2^2$ and $v(x) := 2x_2^2$ (Case 2) then it converges to the global solution after $4$ iterations. Note that, in the first case, $u$ and $v$ are only convex, while $u$ is strongly convex with the parameter $\rho^u=2$ and $v$ is convex in the second case.
The convergence behaviour is illustrated in Figure \ref{fig:F1}. Here, the left figure corresponds to Case 1 and the right one corresponds to Case 2.
\begin{figure}[ht]
\begin{center}
\includegraphics[width=0.98\textwidth,height=4.3cm]{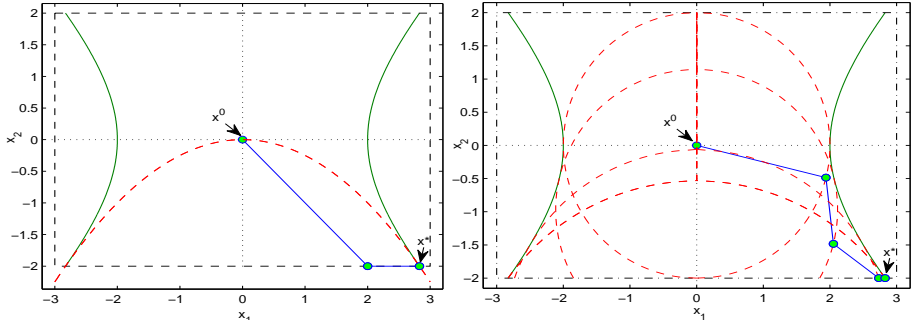}
\end{center}
\caption{Convergence behaviour of Algorithm \ref{alg:A1} using different DC decompositions.}\label{fig:F1}
\end{figure}

The following lemma shows that if Algorithm \ref{alg:A1} terminates after some iterations then $x^k$ is a stationary point of \eqref{eq:nlp_prob}.

\begin{lemma}\label{le:kkt_point}
Suppose that $x^k$ is a solution of \ref{eq:convex_subprob}; then it is a stationary point of the original problem \eqref{eq:nlp_prob}.
\end{lemma}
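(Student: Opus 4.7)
The plan is to show that when $x^k$ itself solves the convex subproblem \ref{eq:convex_subprob}, the KKT conditions of that subproblem collapse exactly onto the stationarity system \eqref{eq:kkt_cond} for \eqref{eq:nlp_prob}, with multiplier $\lambda_0=1$. The key observation is that the linearized constraint and the original DC constraint agree both in value and in the relevant first-order information at the point $x^k$ where the linearization is performed.

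First I would verify that $x^k$ is feasible for \eqref{eq:nlp_prob}. By construction $x^k\in\Omega$, and evaluating the linearized constraint of \ref{eq:convex_subprob} at $x=x^k$ gives $u(x^k)-v(x^k)-\Xi^k(x^k-x^k)=u(x^k)-v(x^k)=g(x^k)$; since $x^k$ is feasible for \ref{eq:convex_subprob} this equals at most $0$, so $g(x^k)\le 0$. Next I would invoke the KKT conditions for the convex subproblem \ref{eq:convex_subprob}, which hold at $x^k$ under the Slater assumption stated just before Algorithm \ref{alg:A1}. These yield a multiplier $\lambda^k\in\mathbf{R}_+^m$ satisfying
\begin{equation*}
0\in\partial f(x^k)+\sum_{i=1}^m\lambda^k_i\bigl[\partial u_i(x^k)-\xi^k_i\bigr]+N_{\Omega}(x^k),
\qquad (\lambda^k)^T\bigl[u(x^k)-v(x^k)-\Xi^k(x^k-x^k)\bigr]=0,
\end{equation*}
where $\xi^k_i$ is the $i$-th row of $\Xi^k$.

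Then I would translate these conditions into \eqref{eq:kkt_cond}. Because $\Xi^k\in\partial v(x^k)$, each $\xi^k_i$ lies in $\partial v_i(x^k)$, so $\{\xi^k_i\}\subseteq\partial v_i(x^k)$ and the inclusion above is also an inclusion with $\xi^k_i$ replaced by a generic element of $\partial v_i(x^k)$. The complementarity relation simplifies to $(\lambda^k)^T g(x^k)=0$ because the linear term in $x^k-x^k$ vanishes. Combined with $\lambda^k\ge 0$ and $g(x^k)\le 0$, these are precisely the requirements of \eqref{eq:kkt_cond} with $\lambda_0=1$, so $(x^k,1,\lambda^k)$ is a stationary triple of \eqref{eq:nlp_prob}.

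I do not anticipate a genuine obstacle: everything rests on the identity $u(x^k)-v(x^k)-\Xi^k(x^k-x^k)=g(x^k)$ and on the inclusion $\{\xi^k_i\}\subseteq\partial v_i(x^k)$. The only subtlety worth stating explicitly is that the Slater condition for \ref{eq:convex_subprob} is needed to guarantee the existence of the multiplier $\lambda^k$; once it is in hand, the argument is essentially a one-line bookkeeping check.
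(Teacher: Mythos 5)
Your proposal is correct and follows essentially the same route as the paper's proof: write down the KKT system of \ref{eq:convex_subprob} at $x^k$, observe that the linearized constraint coincides with $g(x^k)$ there and that $\Xi^k\in\partial v(x^k)$ turns the subproblem's stationarity inclusion into the one in \eqref{eq:kkt_cond}. Your explicit remark that the Slater condition is what guarantees the multiplier $\lambda^k$ exists is a small clarification the paper leaves implicit (it simply assumes the solution comes with a multiplier), but the argument is otherwise identical.
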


\begin{proof}
Suppose that $x^k$ is a solution of \ref{eq:convex_subprob} corresponding to the multiplier $\lambda^k$ then $(x^k,\lambda^k)$ is a solution of its KKT system, i.e., $x^k\in\Omega$, $0\in \partial f(x^k) + [\partial u(x^k) - \Xi^k]^T\lambda^k + N_{\Omega}(x^k)$, $u(x^k) - v(x^k) - (\Xi^k)(x^k-x^k) \leq 0$, $\lambda^k\geq 0$ and $(u(x^k) - v(x^k) - \Xi^k)(x^k-x^k))^T\lambda^k=0$, which implies that $x^k\in\Omega$, $0\in \partial f(x^k) + [\partial u(x^k) - \partial v(x^k)]^T\lambda^k + N_{\Omega}(x^k)$, $u(x^k) - v(x^k)\leq 0$, $\lambda^k\geq 0$ and $[u(x^k) - v(x^k)]^T\lambda^k = 0$. The last five relations mean that $(x^k,\lambda^k)$ satisfies \eqref{eq:kkt_cond}. Thus $x^k$ is a stationary point of \eqref{eq:nlp_prob} corresponding to the multiplier $\lambda^k$. 
\eoproof
\end{proof}

\section {Global convergence of  the SCP algorithm with DC constraints}\label{sec:convergence} 
The next lemma gives us a key property to prove the global convergence of Algorithm \ref{alg:A1}.
\begin{lemma}\label{le:descent_dir}
Suppose that $f$, $u_i$ and $v_i$ $(i=1,\dots,m)$ are $\rho^f$, $\rho^{u_i}$ and $\rho^{v_i}$ - convex, respectively. Then the sequence $\{(x^k,\lambda^{k})\}$ generated by Algorithm \ref{alg:A1} satisfies
\begin{eqnarray}
f(x^k) - f(x^{k+1}) &&\geq \frac{1}{2}(\rho^f + \sum_{i=1}^m\rho^{u_i}\lambda^{k+1}_i)\norm{x^{k+1} - x^k}^2 \nonumber\\
[-1.5ex]\label{eq:descent_dir}\\[-1.5ex]
&& + \frac{1}{2}\sum_{i=1}^m\rho^{v_i}\lambda^{k+1}_i\norm{x^k-x^{k-1}}^2.\nonumber
\end{eqnarray}
\end{lemma}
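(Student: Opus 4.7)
The strategy is to compare the values of the Lagrangian of the subproblem \ref{eq:convex_subprob} at $x^k$ and at its minimizer $x^{k+1}$, exploiting (i) strong convexity of the Lagrangian to lower-bound this gap by $\tfrac{1}{2}(\rho^f + \sum_i \rho^{u_i}\lambda_i^{k+1})\|x^{k+1}-x^k\|^2$, and (ii) feasibility of $x^k$ in the \emph{previous} subproblem \ref{eq:convex_subprob} (with index $k-1$) combined with strong convexity of $v_i$ to extract the term $\tfrac{1}{2}\sum_i \rho^{v_i}\lambda_i^{k+1}\|x^k-x^{k-1}\|^2$.

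First, I would introduce the Lagrangian
\[
L_k(x,\lambda) := f(x) + \sum_{i=1}^m \lambda_i\bigl[u_i(x) - v_i(x^k) - \Xi_i^k(x-x^k)\bigr]
\]
of \ref{eq:convex_subprob} and record its KKT conditions at $(x^{k+1},\lambda^{k+1})$: stationarity $0 \in \partial_x L_k(x^{k+1},\lambda^{k+1}) + N_\Omega(x^{k+1})$, nonnegativity $\lambda^{k+1}\ge 0$, feasibility, and complementary slackness $\sum_i\lambda_i^{k+1}[u_i(x^{k+1})-v_i(x^k)-\Xi_i^k(x^{k+1}-x^k)] = 0$. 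Since the terms $-v_i(x^k)-\Xi_i^k(x-x^k)$ are affine in $x$, and each $u_i$ is $\rho^{u_i}$-convex with $\lambda_i^{k+1}\ge 0$, the function $x\mapsto L_k(x,\lambda^{k+1})$ is $\mu_k$-strongly convex with $\mu_k := \rho^f + \sum_i\rho^{u_i}\lambda_i^{k+1}$. Consequently $x^{k+1}$ minimizes $L_k(\cdot,\lambda^{k+1})+\delta_\Omega$, and testing against $x^k\in\Omega$ yields
\[
L_k(x^k,\lambda^{k+1}) - L_k(x^{k+1},\lambda^{k+1}) \ge \tfrac{\mu_k}{2}\|x^{k+1}-x^k\|^2.
\]

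Next I would evaluate both Lagrangian values explicitly. By complementary slackness, $L_k(x^{k+1},\lambda^{k+1}) = f(x^{k+1})$. At $x^k$ the linearization terms collapse (since $x-x^k=0$), giving $L_k(x^k,\lambda^{k+1}) = f(x^k) + \sum_i\lambda_i^{k+1}[u_i(x^k)-v_i(x^k)]$. The crucial observation — and the part I expect to be the main obstacle to spot — is that $x^k$ is itself the solution of the previous convex subproblem \ref{eq:convex_subprob} indexed by $k-1$, so
\[
u_i(x^k) \le v_i(x^{k-1}) + \Xi_i^{k-1}(x^k-x^{k-1}), \quad i=1,\ldots,m.
\]
Combining this with the $\rho^{v_i}$-strong convexity inequality $v_i(x^k) \ge v_i(x^{k-1}) + \Xi_i^{k-1}(x^k-x^{k-1}) + \tfrac{\rho^{v_i}}{2}\|x^k-x^{k-1}\|^2$ (valid since $\Xi_i^{k-1}\in\partial v_i(x^{k-1})$), I obtain
\[
u_i(x^k) - v_i(x^k) \le -\tfrac{\rho^{v_i}}{2}\|x^k-x^{k-1}\|^2.
\]

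Finally, multiplying by $\lambda_i^{k+1}\ge 0$, summing over $i$, and inserting into the strong-convexity inequality gives
\[
f(x^k) - \tfrac{1}{2}\sum_i \rho^{v_i}\lambda_i^{k+1}\|x^k-x^{k-1}\|^2 - f(x^{k+1}) \ge \tfrac{\mu_k}{2}\|x^{k+1}-x^k\|^2,
\]
which rearranges to \eqref{eq:descent_dir}. The only nontrivial ingredient, beyond routine strong-convexity manipulations, is the coupling of the current subproblem's Lagrangian evaluation at $x^k$ with the feasibility of $x^k$ in the preceding subproblem; that is what produces the cross-iterate term $\|x^k-x^{k-1}\|^2$ and makes the bound meaningful for proving global convergence in the following section.
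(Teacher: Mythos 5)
Your proposal is correct and follows essentially the same route as the paper's proof: the paper obtains your Lagrangian strong-convexity bound by summing the subgradient inequalities for $f$ and $u_i$ against the KKT variational inequality at $x^{k+1}$ with test point $y=x^k$, then uses complementary slackness and, exactly as you do, the feasibility of $x^k$ in the subproblem indexed $k-1$ together with the $\rho^{v_i}$-convexity of $v_i$ to produce the cross-iterate term $\|x^k-x^{k-1}\|^2$. Your packaging via the strongly convex function $L_k(\cdot,\lambda^{k+1})+\delta_\Omega$ is a slightly more compact presentation of the identical argument.
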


\begin{proof}
Since $x^{k+1}$ is a solution of \ref{eq:convex_subprob} corresponding to the multiplier $\lambda^{k+1}$, the KKT condition of \ref{eq:convex_subprob} is expressed as follows:
\begin{equation}\label{eq:kkt_subprob}
\begin{cases}
0 \in \partial f(x^{k+1})  + [\partial u(x^{k+1}) - \Xi^k]^T\lambda^{k+1} + N_{\Omega}(x^{k+1}),\\
0 \geq u(x^{k+1}) - v(x^k) - \Xi^k(x^{k+1}-x^k),  ~ \lambda^{k+1}\geq 0, \\
0 = (\lambda^{k+1})^T[u(x^{k+1}) - v(x^k) - \Xi^k(x^{k+1}-x^k)].
\end{cases}
\end{equation}
From the first line of \eqref{eq:kkt_subprob}, we have
\begin{equation}\label{eq:term_21}
(\xi_f^{k+1})^T(y-x^{k+1}) + (\lambda^{k+1})^T[\Xi_u^{k+1} - \Xi^k](y - x^{k+1}) \geq 0, ~\forall y\in \Omega,
\end{equation}
for all vectors $\xi_f^{k+1} \in \partial f(x^{k+1})$ and matrices $\Xi_u^{k+1} \in \partial u(x^{k+1})$.

Since $f$ and $u_i$ $(i=1,\dots,m)$ are strongly convex on $\Omega$, it holds that
\begin{align}
&f(y)  - f(x^{k+1}) \geq (\xi_f^{k+1})^T( y - x^{k+1}) + \frac{\rho^f}{2}\norm{y - x^{k+1}}^2,~ \forall y\in\Omega, \label{eq:term_22}\\
&u(y) - u(x^{k+1}) \geq \Xi_u^{k+1}( y - x^{k+1}) + \frac{\rho^u}{2}\norm{y - x^{k+1}}^2,~ \forall y\in\Omega, \label{eq:term_23}
\end{align}
where $\rho^u = (\rho^{u_1},\dots, \rho^{u_m})^T$.
Combining \eqref{eq:term_21}, \eqref{eq:term_22} and \eqref{eq:term_23}, and noting that $\lambda^{k+1}\geq 0$, we obtain
\begin{eqnarray}
&&f(y) - f(x^{k+1}) + (\lambda^{k+1})^T[u(y) - u(x^{k+1}) - \Xi^k(y-x^{k+1})]  \nonumber\\
&&\geq (\xi_f^{k+1})^T(y-x^{k+1}) + (\lambda^{k+1})^T(\Xi_u^{k+1} - \Xi^k)(y - x^{k+1}) \nonumber\\
[-1.5ex]\label{eq:term_24}\\[-1.5ex]
&&+ \frac{1}{2}[\rho^f + \sum_{i=1}^m\rho^{u_i}\lambda^{k+1}_i]\norm{y-x^{k+1}}^2 \nonumber\\
&&\geq \frac{1}{2}[\rho^f + \sum_{i=1}^m\rho^{u_i}\lambda^{k+1}_i]\norm{y-x^{k+1}}^2, ~\forall y\in\Omega.\nonumber
\end{eqnarray}
Substituting $y=x^k\in\Omega$ into \eqref{eq:term_24} and after a simple rearrangement, we get
\begin{eqnarray}
&& f(x^k) + (\lambda^{k+1})^T[u(x^k) - v(x^k)] \nonumber\\
&&-  f(x^{k+1}) - (\lambda^{k+1})^T[u(x^{k+1}) - v(x^k) - \Xi^k(x^{k+1}-x^k)] \label{eq:term_24a}\\
&& \geq \frac{1}{2}[\rho^f + \sum_{i=1}^m\rho^{u_i}\lambda^{k+1}_i]\norm{x^{k+1}-x^k}^2.\nonumber
\end{eqnarray}
Using the third line of \eqref{eq:kkt_subprob}, the inequality \eqref{eq:term_24a} is reduced to
\begin{equation}\label{eq:term_24b}
f(x^k) + (\lambda^{k+1})^T[u(x^k) - v(x^k)] -  f(x^{k+1}) \geq \frac{1}{2}[\rho^f + \sum_{i=1}^m\rho^{u_i}\lambda^{k+1}_i]\norm{x^{k+1}-x^k}^2.
\end{equation}
Now, since $v_i$ $(i=1,\dots,m)$ is $\rho^{v_i}$ - convex, we have
\begin{equation*}\label{eq:term_25}
v(x^{k+1}) - v(x^k) \geq \Xi^k(x^{k+1}-x^k) + \frac{\rho^v}{2}\norm{x^{k+1}-x^k}^2,
\end{equation*}
where $\rho^v = (\rho^{v_1},\dots,\rho^{v_m})^T$. 
This inequality implies that
\begin{equation}\label{eq:term_26}
u(x^{k+1}) - v(x^{k+1}) \leq  u(x^{k+1}) - v(x^k) - \Xi^k(x^{k+1}-x^k) - \frac{\rho^v}{2}\norm{x^{k+1}-x^k}^2.
\end{equation}
Using the second line of \eqref{eq:kkt_subprob} for \eqref{eq:term_26}, we obtain
\begin{equation}\label{eq:term_27}
u(x^{k+1}) - v(x^{k+1}) \leq - \frac{\rho^v}{2}\norm{x^{k+1}-x^k}^2 \leq 0.
\end{equation}
Applying \eqref{eq:term_27} with $x^{k}$ instead of $x^{k+1}$ to \eqref{eq:term_24b} yields
\begin{equation}\label{eq:term_27b}
f(x^k)  -  f(x^{k+1}) \geq \frac{1}{2}[\rho^f + \sum_{i=1}^m\rho^{u_i}\lambda^{k+1}_i]\norm{x^{k+1}-x^k}^2 + \frac{1}{2}\sum_{i=1}^m\rho^{v_i}\lambda^{k+1}_i\norm{x^k-x^{k-1}}^2,
\end{equation}
which proves \eqref{eq:descent_dir}.
\eoproof
\end{proof}

\begin{remark}\label{re:feasible_point}
From the proof of Lemma \ref{le:descent_dir} (see \eqref{eq:term_27}) we can see that Algorithm \ref{alg:A1} always generates a feasible sequence $\{x^k\}$ to \eqref{eq:nlp_prob}. If $\rho^v > 0$ then it is strictly feasible.  
Thus Algorithm \ref{alg:A1} can be considered as an {\it inner approximation method}. 
\end{remark}

\begin{remark}\label{re:strong_convex}
If either $f$ is strongly convex or at least one function $u_i$ (reps., $v_i$) $(i=1,\dots, m)$ with respect to $\lambda^{k+1}_i>0$ is strongly convex then the sequence of the objective values $\{f(x^k)\}$ is decreasing. 
\end{remark}

The convergence of Algorithm \ref{alg:A1} is stated by the following result.

\begin{theorem}\label{th:convegence_theorem}
Suppose that $f$ is bounded from below on $D$, 
and the sequence $\{(x^k,\lambda^k)\}$ generated by Algorithm \ref{alg:A1} is bounded on $\Omega\times\mathbf{R}^m_{+}$. Then:
\begin{itemize}
\item[(i)] If  $\rho^f>0$ then $\lim_{k\to\infty}\norm{x^{k+1}-x^k} = 0$, and every accumulation point $(x^{*},\lambda^{*})$ of $\{(x^k,\lambda^k)\}$ is a KKT point of \eqref{eq:nlp_prob}.
\item[(ii)] If there exists an index $i_0\in \{1, \dots, m\}$ such that $\rho^{u_{i_0}} > 0$ (resp., $\rho^{v_{i_0}} > 0$) then
\begin{equation*}
\lim_{k\to\infty}\lambda^{k+1}_{i_0}\norm{x^{k+1}-x^k}^2 = 0 ~~ (\textrm{resp.,}~ \lim_{k\to\infty}\lambda^{k+1}_{i_0}\norm{x^{k}-x^{k-1}}^2 = 0 ),  
\end{equation*}
and every accumulation point of $(x^{*},\lambda^{*})$ of $\{(x^k,\lambda^k)\}$ such that $\lambda^{*}_{i_0} > 0$ is a KKT point of \eqref{eq:nlp_prob}.
\item[{(iii)}] If the set of the KKT points of \eqref{eq:nlp_prob} is finite then the whole sequence $\{ (x^k,\lambda^k) \}$ converges to a KKT point of \eqref{eq:nlp_prob}.
\end{itemize}
\end{theorem}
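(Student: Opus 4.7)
My plan is to exploit the descent inequality of Lemma~\ref{le:descent_dir}, extract summability of the step lengths, and then pass to the limit in the KKT system \eqref{eq:kkt_subprob} of the subproblem \ref{eq:convex_subprob}. By Remark~\ref{re:feasible_point} the iterates $\{x^k\}$ lie in $D$, so the boundedness of $f$ from below on $D$ combined with Lemma~\ref{le:descent_dir} shows that $\{f(x^k)\}$ is nonincreasing and convergent, and telescoping yields
\begin{equation*}
\sum_{k\geq 0}\Bigl[\bigl(\rho^f + \sum_{i=1}^m\rho^{u_i}\lambda_i^{k+1}\bigr)\|x^{k+1}-x^k\|^2 + \sum_{i=1}^m\rho^{v_i}\lambda_i^{k+1}\|x^k-x^{k-1}\|^2\Bigr] < +\infty.
\end{equation*}

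For part (i), since $\rho^f>0$ and $\lambda^{k+1}\geq 0$, the first summand alone forces $\|x^{k+1}-x^k\|\to 0$. To verify the KKT property of any accumulation point, I fix a subsequence with $(x^{k_j},\lambda^{k_j})\to (x^*,\lambda^*)$; then $x^{k_j+1}\to x^*$ as well. Boundedness of $\{x^k\}$ together with the local boundedness of the convex subdifferentials $\partial v$, $\partial u$, $\partial f$ over a neighbourhood of $\{x^k\}$ allow me, after extracting further subsequences, to assume $\Xi^{k_j}\to\Xi^*$, $\Xi_u^{k_j+1}\to\Xi_u^*$, $\xi_f^{k_j+1}\to\xi_f^*$. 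The closed-graph property of convex subdifferentials then gives $\Xi^*\in\partial v(x^*)$, $\Xi_u^*\in\partial u(x^*)$, $\xi_f^*\in\partial f(x^*)$. Passing to the limit in \eqref{eq:kkt_subprob} (using the closedness of $N_{\Omega}$ and the continuity of $u$ and $v$) yields precisely \eqref{eq:kkt_cond} with multiplier $\lambda^*$.

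For part (ii), the summability inequality gives $\rho^{u_{i_0}}\lambda_{i_0}^{k+1}\|x^{k+1}-x^k\|^2\to 0$ (respectively $\rho^{v_{i_0}}\lambda_{i_0}^{k+1}\|x^k-x^{k-1}\|^2\to 0$), which after dividing by the positive constant yields the stated limits. If $(x^{k_j},\lambda^{k_j})\to (x^*,\lambda^*)$ with $\lambda^*_{i_0}>0$, then $\lambda^{k_j}_{i_0}$ is eventually bounded below by $\lambda^*_{i_0}/2$, forcing $\|x^{k_j+1}-x^{k_j}\|\to 0$ in the $\rho^{u_{i_0}}$ case or $\|x^{k_j}-x^{k_j-1}\|\to 0$ in the $\rho^{v_{i_0}}$ case. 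Either way, along the relevant subsequence both $x^{k_j}$ and its neighbour converge to $x^*$, and the limit-passing argument of part (i) applies verbatim.

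For part (iii), under the hypotheses of (i) the sequence $\{x^k\}$ is bounded with $\|x^{k+1}-x^k\|\to 0$, so its accumulation set is closed and connected by the classical Ostrowski lemma; being contained in the (finite) projection of the KKT set onto the $x$-coordinate, it reduces to a singleton, so $x^k\to x^*$. Boundedness of $\{\lambda^k\}$ combined with the fact that every cluster point of $(x^k,\lambda^k)$ is a KKT point and the finiteness of the KKT set then force the pair to converge to a single KKT point. The main obstacle will be the limit passage in the KKT system during parts (i) and (ii): producing convergent subsequences of the subgradients $\Xi^k$, $\Xi_u^{k+1}$, $\xi_f^{k+1}$ in the absence of differentiability, which is why I rely on the outer semi-continuity (closed graph) of convex subdifferentials over bounded domains as the key technical tool.
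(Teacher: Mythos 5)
Your proposal is correct and follows essentially the same route as the paper's proof: telescoping the descent inequality of Lemma~\ref{le:descent_dir} against the lower bound of $f$ on $D$ to obtain summability, deducing the vanishing of the (weighted) step lengths in cases (i) and (ii), passing to the limit in the subproblem KKT system \eqref{eq:kkt_subprob} via the outer semicontinuity of the convex subdifferentials, and invoking the Ostrowski argument for (iii). Your explicit extraction of convergent subgradient subsequences and your remark that (iii) implicitly relies on the step-length condition from (i) or (ii) merely spell out details the paper leaves tacit.
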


\begin{proof}
From Lemma \ref{le:descent_dir}, it turns out  that the sequence $\{f(x^k)\}$ is nonincreasing and is bounded from below by assumption. Then it converges to $f^{*} >-\infty$. Summing up inequality \eqref{eq:descent_dir} from $k=1$ to $k=N$ and then passing to the limit as $k\to\infty$ we obtain
\begin{align}\label{eq:thm1_tmp}
\sum_{k=1}^{\infty}\left[\frac{1}{2}(\rho^f \!+\! \sum_{i=1}^m\rho^{u_i}\lambda^{k+1}_i)\norm{x^{k+1} \!-\! x^k}^2
\! + \! \frac{1}{2}\sum_{i=1}^m\rho^{v_i}\lambda^{k+1}_i\norm{x^k \!-\! x^{k-1}}^2\right] \leq f(x^0) \!-\! f^{*} < +\infty. 
\end{align}
If $\rho^f > 0$ then the inequality \eqref{eq:thm1_tmp} implies that $\lim_{k\to\infty}\norm{x^{k+1}-x^k} = 0$. 
Since $\{(x^k, \lambda^k)\}$ is bounded by assumption, it has at least one limit point. Suppose that $(x^{*},\lambda^{*})$ is a limit point of $\{(x^k,\lambda^k)\}$, which means that there exists a subsequence $\{(x^k,\lambda^k)\}_{k\in\mathcal{K}}$ of $\{(x^k,\lambda^k)\}$ such that $(x^{k},\lambda^k)(k\in\mathcal{K})\to (x^{*},\lambda^{*})$, where $\lambda^{*}\in\mathbf{R}^m_{+}$. Since $\partial f$, $\partial u_i$ and $\partial v_i$ ($i=1,\dots,m$) are upper semicontinuous, passing to the limit of the subsequence as $k (\in\mathcal{K})\to\infty$ in \eqref{eq:kkt_subprob} we conclude that $(x^{*},\lambda^{*})$ is a KKT point of \eqref{eq:nlp_prob}. The statement (i) is proven.

For the statement (ii), it is sufficient to prove the first case (i.e., there exists $i_0$ such that $\rho^{u_{i_0}}>0$), the second case is done similarly. Suppose that there exists at least one index $i_0\in \{1, \dots, m\}$ such that $\rho^{u_{i_0}} > 0$. 
Using again \eqref{eq:thm1_tmp}, it is easy to show that $\lim_{k\to\infty}\lambda^{k+1}_{i_0}\norm{x^{k+1}-x^k}^2 = 0$. As before, if $(x^{*},\lambda^{*})$ is a limit point of a subsequence $\{(x^k,\lambda^k)\}_{k\in\mathcal{K}}$ such that $\lambda^{*}_{i_0}>0$ then we have $\lim_{k(\in\mathcal{K})\to\infty}\norm{x^{k+1}-x^k} = 0$. Passing to the limit through the subsequence as $k(\in\mathcal{K})\to\infty$ in \eqref{eq:kkt_subprob} we conclude again that $(x^{*},\lambda^{*})$ is a KKT point of \eqref{eq:nlp_prob}. 

The last statement (iii) can be proved similarly using the same technique as in \cite{Ostrowski1966}[Chapt. 28].
\eoproof
\end{proof}

Suppose that $x^{*}$ is a stationary point of \eqref{eq:nlp_prob} associated with a multiplier $\lambda^{*}$. If we denote by
\begin{equation}\label{eq:inactive_index}
I_{+}(x^{*}):=\{ i\in\{1,\dots,m\}~|~ \lambda_i^{*} > 0\} 
\end{equation}
the strictly active set of \eqref{eq:nlp_prob} at $x^{*}$, then the assumption (ii) in Theorem \ref{th:convegence_theorem} requires that $I_{+}(x^{*})\neq\emptyset$ and at least one function $u_i$ (or $v_i$) $i\in I_{+}(x^{*})$ is strongly convex. 

\begin{remark}\label{re:regularization}({\it Regularization}). 
From Lemma \ref{le:descent_dir}, we see that if $f$, $u_i$ and $v_i$ are only convex for all $i\in I_{+}(x^{*})$ (but not strongly convex) then Algorithm \ref{alg:A1} might not make $f$ strictly decreasing, i.e, $f(x^{k+1}) \not< f(x^k)$ for $k\geq 0$. 
In order to overcome this issue, a regularization term can be added to the objective function of \ref{eq:convex_subprob}. Instead of solving  problem \ref{eq:convex_subprob}, Algorithm \ref{alg:A1} is modified at Step 2 by solving the following regularized problem:
\begin{equation}\label{eq:convex_subprob_regu}
\left\{\begin{array}{cl}
\displaystyle\min_{x\in\mathbf{R}^n} &f(x) + \frac{\rho}{2}\norm{x-x^k}^2\\
\textrm{s.t.} &u(x) - v(x^k) - \Xi^k(x-x^k) \leq 0, \tag{$\textrm{P}_{\textrm{r}}(x^k)$}\\
& x\in\Omega,
\end{array}\right.
\end{equation}
where $\Xi^k \in \partial v(x^k)$ is arbitrary, and $\rho>0$ is a regularization parameter. This technique is closely related to the \textit{proximal point methods} \cite{Lewis2008,Rockafellar1997}.   
\end{remark}
However, using the regularization term with a large $\rho$ may lead to short steps. Consequently, Algorithm \ref{alg:A1} converges slowly to a KKT point. 
In practice, we only add the regularization term if the solution of \ref{eq:convex_subprob} does not make $f$ strictly decreasing at the current iteration.
Note that if $\rho>0$ then Algorithm \ref{alg:A1} always makes $f$ strictly decreasing, i.e., $f(x^k) - f(x^{k+1}) \leq -\frac{\rho}{2}\norm{x^{k+1}-x^k}^2 < 0$ for $x^{k+1}\neq x^k$.  

\begin{remark}\label{re:DC_objective}({\it Handling the DC objective function}). 
If the objective function $f$ of \eqref{eq:nlp_prob} is also a DC function and $f(x) = f_1(x) - f_2(x)$ is a DC decomposition of $f$, then  subproblem \ref{eq:convex_subprob} at Step 2 of  Algorithm \ref{alg:A1} is replaced by the following convex subproblem:
\begin{equation}\label{eq:convex_subprob_dc_obj}
\left\{\begin{array}{cl}
\displaystyle\min_{x\in\mathbf{R}^n} &f_1(x) - f_2(x^k) - (\xi^k_{f_2})^T(x-x^k) \\
\textrm{s.t.} &u(x) - v(x^k) - \Xi^k(x-x^k) \leq 0, \tag{$\textrm{P}_{\textrm{dc}}(x^k)$}\\
& x\in\Omega,
\end{array}\right.
\end{equation}
with matrix $\Xi^k \in \partial v(x^k)$ and vector $\xi^k_{f_2}\in\partial f_2(x^k)$. 
The conclusions of Theorem \ref{th:convegence_theorem} are still valid for this modification. A smooth variant of this algorithm was considered in \cite{Smola2005} applied to DC programs arising in support vector machines, without convergence theory, however. 
\end{remark} 

\section{A relaxed SCP algorithm with DC constraints}\label{sec:SCP_penalty} 
According to DCA approaches, to handle DC constraints, a penalty function is used to bring these constraints into the objective function \cite{An1999}. 
The obtained problem becomes an unconstrained or convex constrained DC program, and the unconstrained DCA can be applied to solve this problem.
We start this section by introducing one possible DC decomposition to handle the DC constraints using $L_1$-penalty functions, which is often used in practice \cite{An2005,An1999}.
We will show through an example that by using an $L_1$-penalty function to handle the DC constraints, DCA may make only slow progress to a stationary point of \eqref{eq:nlp_prob}.  
  
Let us define the $L_1$-penalty function of \eqref{eq:nlp_prob} as follows:
\begin{equation}\label{eq:l1_penalty}
\phi(x; \mu) := f(x) + \mu \norm{[g(x)]_{+}}_1,
\end{equation}
where $\mu>0$ is a penalty parameter and $[g(x)]_{+} = \max\{ g(x), 0 \}$. 
Note that if $g$ has a DC decomposition $(u, v)$ then we have  $[g(x)]_{+} = \max\{u(x), v(x)\} - v(x)$. Since $u$ and $v$ are convex, $\max\{u, v\}$ is also convex. Thus $(\max\{u(\cdot), v(\cdot)\}, v(\cdot))$ is a DC decomposition of $[g(\cdot)]_{+}$. 
Since $\phi(x;\mu) = f(x) + \mu\sum_{i=1}^m[\max\{u_i(x), v_i(x)\} - v_i(x)] = f(x) + \mu\sum_{i=1}^m[\max\{u_i(x), v_i(x)\} - \mu v_i(x)]$, if we define $u_{\mu}(x) := f(x) + \mu\sum_{i=1}^m\max\{u_i(x), v_i(x)\}$ and $v_{\mu}(x) := \mu\sum_{i=1}^m v_i(x)$ then $\phi(x;\mu)$ is a DC function, and $(u_{\mu}, v_{\mu})$ is a DC decomposition of $\phi(x;\mu)$.
 
The $L_1$-penalized problem associated with \eqref{eq:nlp_prob} can be rewritten as a convex constrained DC program:
\begin{equation}\label{eq:penalty_prob}
\min_{x\in \Omega} \big\{\phi(x;\mu) = u_{\mu}(x) - v_{\mu}(x)\big\}. \tag{$\textrm{P}^{\textrm{uc}}_{\mu}$}  
\end{equation}
DCA \cite{An1999} starts from an initial point $x^0\in\Omega$ and generates a sequence $\{ x^k \}$ by solving the  following convex subproblem:
\begin{equation}\label{eq:penalty_subprob}
\min_{x\in \Omega} u_{\mu}(x) - v_{\mu}(x^k) - (\xi^k_{\mu})^T(x-x^k), \tag{$\textrm{P}^{\textrm{uc}}_{\mu}(x^k)$} 
\end{equation}
where $\xi_{\mu}^k \in \partial v_{\mu}(x^{k})$ and $\mu$ is fixed to a suitable large value.
It is proved in \cite{An1999} that for this DC decomposition, there exists an exact penalty parameter $\mu_l>0$ such that for all $\mu \geq \mu_l$, any solution of problem \eqref{eq:penalty_prob} solves \eqref{eq:nlp_prob}. 

Now, we show that by using this standard technique, DCA may lead to slow convergence to a stationary point. 
Indeed, we consider an example by minimizing a convex function $f$ subject to a DC quadratic constraint $\frac{1}{2}(x^TPx - x^TQx) \leq p^Tx + r$, where matrix $P$ is symmetric positive semidefinite, $Q$ is symmetric positive definite, $p\in\mathbf{R}^n$, and $r\in\mathbf{R}$. If we define $u(x) := \frac{1}{2}x^TPx - p^Tx - r$ and $v(x) := \frac{1}{2}x^TQx$ then $v$ is strongly convex with parameter $\rho^v = \lambda_{\min}(Q)$, where $\lambda_{\min}(Q)$ is the smallest eigenvalue of $Q$. 
Applying DCA to problem \eqref{eq:penalty_prob} we have $v_{\mu}(x) = \mu v(x)$ that is strongly convex with parameter $\rho^{v_{\mu}} = \mu \lambda_{\min}(Q)$. If $\mu$ is large then $\rho^{v_{\mu}}$ is also large. In this case, DCA makes only slow progress to a stationary point of \eqref{eq:nlp_prob}. 

Instead of using the penalty function \eqref{eq:l1_penalty} directly, in the SCP framework, we automatically obtain a different relaxed algorithm. 
We first relax the DC constraints by  
\begin{eqnarray}
&&\min_{x,s}f(x) + \mu\sum_{i=1}^ms_i\nonumber\\
&&\text{s.t.}~~ u(x) - v(x) \leq s, \label{eq:relaxed_prob}\\
&&{~~~~~~~}x\in\Omega, ~s\geq 0. 
\end{eqnarray}
We use a relaxation technique to handle possibly inconsistent linearizations that may lead to infeasibility of the convex subproblem \ref{eq:convex_subprob} in Algorithm \ref{alg:A1}. 
Note that $u(x)-s$ is convex in $(x,s)$ as well as $v(x)$. Each SCP-DC subproblem is then given by:
\begin{equation}\label{eq:convex_subprob_mu}
\left\{\begin{array}{cl}
\displaystyle\min_{x\in\mathbf{R}^n} &f(x) + \mu \sum_{i=1}^m s_i\\
\textrm{s.t.} &u(x) - v(x^k) - \Xi^k(x-x^k) \leq s, \tag{$\textrm{P}(x^k;\mu)$}\\
& s \geq 0, ~ x\in\Omega.
\end{array}\right.
\end{equation}
A relaxed variant of Algorithm \ref{alg:A1} called \textit{relaxed SCP algorithm with DC constraints} (rSCP-DC) is described as follows:
  
\noindent\rule[1pt]{\textwidth}{1.0pt}{~~}
\begin{algorithm}\vskip -0.3cm\label{alg:A2}{~}
\end{algorithm}
\vskip -0.4cm
\noindent\rule[1pt]{\textwidth}{0.5pt}

\noindent{\bf Initialization:} Choose a penalty parameter $\mu_0 > 0$. 
                               Take an initial point $x^0$ in $\Omega$. Set $k:=0$.\\
\noindent{\bf Iteration $k$:}  For a given $x^k$, execute the three steps below:
\begin{itemize}
\item[]\textit{Step 1}: Compute a subgradient matrix $\Xi^k \in \partial v(x^k)$.
\item[]\textit{Step 2}: Solve the convex subproblem \ref{eq:convex_subprob_mu} with $\mu = \mu_k$
to get a solution $(x^{k+1}, s^{k+1})$ and the corresponding multiplier $\lambda^{k+1}$.
\item[]\textit{Step 3}: If $\norm{x^{k+1} - x^k} \leq \varepsilon$ and $\norm{s^{k+1}}\leq\varepsilon$ with a given tolerance $\varepsilon>0$, then stop. Otherwise, update the parameter $\mu_k$, increase $k$ by $1$ and go back to Step 1.
\end{itemize}
\vskip-0.3cm
\noindent\rule[1pt]{\textwidth}{1.0pt}
Note that the subproblem \ref{eq:convex_subprob_mu} is always feasible and the convergence theory of the previous section is applicable. 
However, the parameter $\mu$ influences the behaviour of Algorithm \ref{alg:A2}.  
If the parameter $\mu$ is chosen too large, the minimization enforces $s$ to decrease, which reduces the infeasibility gap of the subproblems \ref{eq:convex_subprob_mu} too fast. Otherwise, the infeasibility gap $s$ may be increased. Balancing between the optimality and the infeasibility plays an important role in Algorithm \ref{alg:A2}. 
The parameter $\mu_k$ can be fixed to a ``suitable'' value or updated at each iteration of the algorithm. A refined variant, which is however not the topic of this paper, separately updates penalty parameters $\mu_i$ for each $s_i$ and make sure that they are sufficiently large, but not much larger than the corresponding constraint multipliers.  

The following inequality shows that Algorithm \ref{alg:A2} makes a decreasing progress of the objective function $f_{\mu}(x, s) := f(x) + \mu\sum_{i=1}^ms_i$.

\begin{corollary}\label{co:descent_lemma_for_penalty}
Suppose that $f$, $u_i$ and $v_i$ $(i=1,\dots,m)$ are $\rho^f$, $\rho_i^{u}$ and $\rho_i^{v}$ - convex, respectively. Then the sequence $\{ (x^k,\lambda^{k}, s^k) \}$ generated by Algorithm \ref{alg:A2} satisfies
\begin{eqnarray}
f_{\mu}(x^k, s^k) - f_{\mu}(x^{k+1}, s^{k+1}) &&\geq \frac{1}{2}(\rho^f + \sum_{i=1}^m\rho^{u_i}\lambda^{k+1}_i)\norm{x^{k+1} - x^k}^2 \nonumber\\
[-1.5ex]\label{eq:descent_dir_for_penalty}\\[-1.5ex]
&& + \frac{1}{2}\sum_{i=1}^m\rho^{v_i}\lambda^{k+1}_i\norm{x^k-x^{k-1}}^2,\nonumber
\end{eqnarray}
where $f_{\mu}(x, s) := f(x) + \mu\sum_{i=1}^ms_i$.
\end{corollary}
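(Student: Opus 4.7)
The plan is to adapt the proof of Lemma~\ref{le:descent_dir} directly to the relaxed subproblem \ref{eq:convex_subprob_mu}, treating the slack $s$ as part of the optimization variable. Viewed this way, \ref{eq:convex_subprob_mu} is the SCP subproblem for a reformulated DC problem over the pair $(x,s)\in\Omega\times\mathbf{R}_{+}^{m}$ with DC decomposition $\tilde u(x,s)=u(x)-s$, $\tilde v(x,s)=v(x)$; the extra linear dependence on $s$ does not affect the strong convexity moduli that appear in the bound. The only genuinely new ingredient, compared with Lemma~\ref{le:descent_dir}, is to control the slack term using the KKT conditions coming from the $s$-block.

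First, I would write out the full KKT system of \ref{eq:convex_subprob_mu} at $(x^{k+1},s^{k+1})$ with multipliers $\lambda^{k+1}\geq 0$ (for the DC constraint) and $\eta^{k+1}\geq 0$ (for $s\geq 0$). Stationarity in $x$ gives exactly the first line of \eqref{eq:kkt_subprob}, while stationarity in $s$ yields the identity $\lambda^{k+1}+\eta^{k+1}=\mu\mathbf{1}$, together with the complementarity $(\eta^{k+1})^{T}s^{k+1}=0$ and the relaxed feasibility $u(x^{k+1})-v(x^{k})-\Xi^{k}(x^{k+1}-x^{k})\leq s^{k+1}$. Using the $x$-stationarity together with the strong convexity of $f$ and of $u_i$, the same chain of inequalities \eqref{eq:term_21}--\eqref{eq:term_24} applies verbatim, and setting $y=x^{k}\in\Omega$ and invoking complementarity for $\lambda^{k+1}$ produces
\[
f(x^{k})+(\lambda^{k+1})^{T}[u(x^{k})-v(x^{k})]-f(x^{k+1})-(\lambda^{k+1})^{T}s^{k+1}\geq \tfrac{1}{2}\bigl[\rho^{f}+\textstyle\sum_{i}\rho^{u_i}\lambda^{k+1}_i\bigr]\|x^{k+1}-x^{k}\|^{2}.
\]

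Next, I would reuse the inequality \eqref{eq:term_26}--\eqref{eq:term_27} at iteration $k-1$, where the relaxed feasibility at that step reads $u(x^{k})-v(x^{k-1})-\Xi^{k-1}(x^{k}-x^{k-1})\leq s^{k}$. Combining with the $\rho^{v_i}$-strong convexity of $v_i$ at $x^{k-1}$ gives the componentwise bound $u(x^{k})-v(x^{k})\leq s^{k}-\tfrac{\rho^{v}}{2}\|x^{k}-x^{k-1}\|^{2}$. Multiplying by $\lambda^{k+1}\geq 0$ and substituting into the previous display, we obtain
\[
f(x^{k})-f(x^{k+1})+(\lambda^{k+1})^{T}(s^{k}-s^{k+1})\geq \tfrac{1}{2}\bigl[\rho^{f}+\textstyle\sum_{i}\rho^{u_i}\lambda^{k+1}_i\bigr]\|x^{k+1}-x^{k}\|^{2}+\tfrac{1}{2}\textstyle\sum_{i}\rho^{v_i}\lambda^{k+1}_i\|x^{k}-x^{k-1}\|^{2}.
\]

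Finally, I would upgrade $(\lambda^{k+1})^{T}(s^{k}-s^{k+1})$ to $\mu\mathbf{1}^{T}(s^{k}-s^{k+1})$: since $\mu\mathbf{1}-\lambda^{k+1}=\eta^{k+1}\geq 0$ and $(\eta^{k+1})^{T}s^{k+1}=0$, one has $(\mu\mathbf{1}-\lambda^{k+1})^{T}(s^{k}-s^{k+1})=(\eta^{k+1})^{T}s^{k}\geq 0$ because $s^{k}\geq 0$. This yields $\mu\mathbf{1}^{T}(s^{k}-s^{k+1})\geq(\lambda^{k+1})^{T}(s^{k}-s^{k+1})$, and the left-hand side is exactly $f_{\mu}(x^{k},s^{k})-f_{\mu}(x^{k+1},s^{k+1})-[f(x^{k})-f(x^{k+1})]$, which gives \eqref{eq:descent_dir_for_penalty}. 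The main obstacle, and the only genuinely new point relative to Lemma~\ref{le:descent_dir}, is this last comparison between $\lambda^{k+1}$ and $\mu\mathbf{1}$ on the slack differences; once the $s$-block KKT identity $\lambda^{k+1}+\eta^{k+1}=\mu\mathbf{1}$ is combined with the sign of $s^{k}$ and the complementarity with $\eta^{k+1}$, everything reduces to the proof of Lemma~\ref{le:descent_dir}.
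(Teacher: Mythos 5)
Your proof is correct and follows exactly the route the paper intends: the paper states this result without proof, as an immediate corollary of Lemma~\ref{le:descent_dir} applied to the relaxed subproblem viewed over the pair $(x,s)$, which is precisely your adaptation. You also correctly identify and resolve the one genuinely new step --- using the $s$-block stationarity $\lambda^{k+1}+\eta^{k+1}=\mu\mathbf{1}$ together with $(\eta^{k+1})^{T}s^{k+1}=0$ and $s^{k}\geq 0$ to pass from $(\lambda^{k+1})^{T}(s^{k}-s^{k+1})$ to $\mu\mathbf{1}^{T}(s^{k}-s^{k+1})$ --- which the paper leaves implicit.
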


The conclusions of Theorem \ref{th:convegence_theorem} still hold for this case, where the objective function is $f_{\mu}(x, s)$ (with a fixed value $\mu>0$) instead of $f$.

= 
\section{Numerical tests}\label{sec:numerical_results} 
To verify the performance of Algorithms \ref{alg:A1} and \ref{alg:A2}, we implement two numerical examples. The first example solves nonconvex quadratically constrained quadratic programs (ncvQCQP). The second one is a mathematical program with complementarity constraints.
  
\subsection{Example 1}\label{exam:QCQP}
Consider the following indefinite quadratically constrained quadratic programming problem:
\begin{equation}\label{eq:ncvQCQP}
\left\{\begin{array}{cl}
\displaystyle\min_{x\in\mathbf{R}^n} &f(x):=\frac{1}{2}x^TQx + q^Tx \\
\textrm{s.t.} &\frac{1}{2}x^TPx + p^Tx \leq \alpha,\\
              &Ax \leq b,\\
              &\underline{l}\leq x\leq \bar{u},
\end{array}\right.\tag{$\textrm{ncvQCQP}$}
\end{equation}
where $q, p, \underline{l}, \bar{u} \in\mathbf{R}^n$, $\alpha\in\mathbf{R}$, $b\in\mathbf{R}^{m_2}$, $Q$ is a symmetric positive semidefinite matrix in $\mathbf{R}^{n\times n}$, $A$ is an $m_2\times n$ real matrix, and $P$ is an $n\times n$ symmetric indefinite matrix.
If $P$ is symmetric positive semidefinite then problem \eqref{eq:ncvQCQP} is a convex quadratically constrained quadratic programming problem (QCQP) \cite{Boyd2004}.    
 
We first test Algorithms 1 and 2 with some random data in $[-10, 10]$ and compare the performance with the built-in Matlab solver \textsc{fmincon} for $10$ problems. 
The data is created as follows:
\begin{itemize}
\item Generate a random matrix $M$ and compute $Q := M^TM + 0.5I_n$, where $I_n$ is the identity matrix in $\mathbf{R}^{n\times n}$.
\item Vectors $q$, $p$, $b$ and matrix $A$ are random in $[-10,10]$, and $\alpha = 10$.
\item Generate a random matrix $P_{\textrm{r}}$ in $[-10, 10]$ and then compute $P:= 0.5(P_{\textrm{r}} + P_{\textrm{r}}^T)$. 
\item The lower bound vector $\underline{l}$ and the upper bound vector $\bar{u}$  are given by $(-5,\dots, -5)^T$ and $(10,\dots, 10)^T$, respectively.
\end{itemize}
Since every symmetric matrix $P$ can be decomposed as $P = P_1-P_2$, where $P_1$ and $P_2$ are symmetric positive semidefinite (using spectral decompositions).
The constraint
\begin{equation*}
\frac{1}{2}x^TPx + p^Tx \leq \alpha 
\end{equation*}
is expressed as a DC constraint:
\begin{equation*}
\frac{1}{2}x^TP_1x + p^Tx  - \frac{1}{2}x^TP_2x \leq \alpha, 
\end{equation*}
where $P_1 = V\Sigma_{+}V^T$ and $P_2=V\Sigma_{-}V^T$ with $\Sigma_{+} = \text{diag}( \sigma^{+}_i )$ and $\Sigma_{-} = \text{diag}( \sigma^{-}_i )$, $\sigma^{+}_i = \max\{\sigma_i, 0\}$, $\sigma^{-}_i = \max\{-\sigma_i, 0 \}$, $\sigma_i$ is the $i^{\text{th}}$ eigenvalue of matrix $P$, and $V$ is a matrix whose columns are formed by the eigenvectors of  $P$.

We implement Algorithms \ref{alg:A1} and \ref{alg:A2} in Matlab 7.8.0 (R2009a) running on a PC desktop with Intel(R) Core(TM)2 Quad CPU Q6600 2.4GHz, 3Gb RAM. We use the same DC decomposition of the DC constraint in both algorithms. 
To solve the convex quadratic subproblems, we use the CVX package (with Sedumi as a solver)\footnote{Available at: http://cvxr.com/cvx/}. The tolerance is given by $\varepsilon=10^{-6}$ and the penalty parameter $\mu_k$ is fixed to a certain value in Algorithm \ref{alg:A2} (see Tables \ref{tb:example_01} and \ref{tb:example_01.2}).
The computational results are reported in Table \ref{tb:example_01}.
\begin{table}
\begin{center}
\caption{Computational results of Algorithms \ref{alg:A1} and \ref{alg:A2} for \eqref{eq:ncvQCQP}.}\label{tb:example_01}
{\footnotesize
\begin{tabular}{|c|c|r|c|c|r|c|r|c|c|c|r|}\hline\hline
\multicolumn{4}{|c|}{Problem Information} & \multicolumn{2}{|c|}{Algorithm \ref{alg:A1}} & \multicolumn{4}{|c|}{Algorithm \ref{alg:A2}} & \multicolumn{2}{|c|}{\textsc{fmincon}}\\ \hline
$\texttt{m}_2$ & \texttt{n}  & $f^{*}$ & \texttt{error} &\!\!\texttt{iter}\!\!& \texttt{time} & \!\!\texttt{iter}\!\! & \texttt{time} & $\mu_k$ & $\rho$ & \!\!\texttt{iter}\!\! & \texttt{time}\\  \hline
 5 &  10 &  121.3768 & \!\!$3\!\times\! 10^{-4}$\!\!  &  64 &  16.13 &   3 &   1.88 & 0.1  & 0 &  36  &     1.54  \\ \hline
10 &  30 &   12.1228 & \!\!$3\!\times\! 10^{-4}$\!\!  &  63 &  18.18 &   3 &   1.37 & 0.1  & 0 &  68  &    11.89  \\ \hline
10 &  50 &   -1.5614 & \!\!$2\!\times\! 10^{-4}$\!\!  &  72 &  44.19 &   4 &   3.05 & 0.1  & 0 &  86  &    49.76  \\ \hline
10 & 100 &   -1.2812 & \!\!$3\!\times\! 10^{-4}$\!\!  &  67 &  58.38 &   4 &   3.51 & 0.1  & 0 & 178  &   664.41  \\ \hline
20 & 100 &   13.5225 & \!\!$2\!\times\! 10^{-4}$\!\!  &  72 &  56.68 &   3 &   2.83 & 0.1  & 0 & 217  &   747.50  \\ \hline
20 & 200 &   -2.3946 & \!\!$2\!\times\! 10^{-4}$\!\!  &  76 &  90.29 &   4 &   4.47 & 0.1  & 0 & 400  & 17328.78  \\\hline
30 & 200 &    3.3814 & \!\!$3\!\times\! 10^{-4}$\!\!  &  68 &  81.06 &   4 &   6.62 & 0.1  & 0 & 290  & 12937.88  \\\hline
30 & 300 &    7.0023 & \!\!$3\!\times\! 10^{-4}$\!\!  &  96 & 203.63 &   4 &  10.47 & 0.1  & 0 & $\#$ &  $\#$     \\\hline
40 & 400 &   17.2517 & \!\!$3\!\times\! 10^{-4}$\!\!  &  77 & 274.58 &   4 &  15.66 & 0.1  & 0 & $\#$ &  $\#$     \\\hline
50 & 500 &   44.5623 & \!\!$2\!\times\! 10^{-4}$\!\!  & 100 & 612.48 &   4 &  25.77 & 0.1  & 0 & $\#$ &  $\#$     \\\hline\hline
\end{tabular}}
\vskip -0.3cm
\end{center}
\end{table}
For comparison, we solve three problems taken from \cite{Floudas2007}.  The two first problems ($P_1$, $P_2$) are in Chapter 3\cite{Floudas2007}[test problems 1 and 2, respectively], while the last one ($P_3$) is a VLSI design problem in Chapter 3\cite{Floudas2007}[test problem 2].
The best-known solutions and optimal values of $P_1$, $P_2$ are given in \cite{Floudas2007}:
\begin{align*}
&x_1^{*} = (579.31, 1359.97, 5109.97, 182.02, 295.6, 217.98, 286.42, 395.60)^T, ~ f^{*}_1 = 7049.25,\\
&x^{*}_2 = (78, 33, 29.9953, 45, 36.7758)^T, ~f_2^{*}= 30665.5387,  
\end{align*}
respectively. 
The optimal value of $P_3$ is $f^{*}_3 = 146.25$.  
Our computational results for these problems are reported in Table \ref{tb:example_01.2}, which closely approximate to the best-known solution reported in \cite{Floudas2007}.

\begin{table}
\begin{center} 
\caption{Computational results of Algorithms \ref{alg:A1} and \ref{alg:A2} for three nonconvex QP problems in \cite{Floudas2007}.}\label{tb:example_01.2}
{\footnotesize
\begin{tabular}{|c|c|c|c|c|c|c|c|c|}\hline\hline
\multicolumn{4}{|c|}{Problem Information} & \multicolumn{5}{|c|}{Algorithm \ref{alg:A1} (or \ref{alg:A2})}\\ \hline
$\texttt{N}^0$ & {\scriptsize $[\texttt{n}, \texttt{m}_1, \texttt{m}_2, \texttt{m}_3]$} & \textrm{otype} & $f^{*}$ (in \cite{Floudas2007}) & \texttt{iter}  & \texttt{time} &$\mu_k$ & $\rho$ & $f^{*}$ \\ \hline   
$\texttt{P}_1$ &  [8,0,3,3] & \texttt{ln} & 7049.25 &  176 & 74.36 & $100$ & $1\times 10^{-3}$ & 7049.25352  \\ \hline
$\texttt{P}_2$ &  [5,0,0,6] & \texttt{nq} & -30665.5387 &   5  &  2.94 &  810 & $1\times 10^{-3}$ & -30665.53892  \\ \hline
$\texttt{P}_3$ &  [12,5,2,6] &\texttt{nq} & 146.25 &   5  &  2.76 &  0  &   0 & 146.25000  \\ \hline\hline
\end{tabular}}
\end{center}
\end{table}
The notations in Tables \ref{tb:example_01} and \ref{tb:example_01.2} include: $n$, $m_1$, $m_2$, $m_3$ are the size of the problems  (variables, linear equality, linear inequality and DC constraints, respectively), $f^{*}$ is the optimal value, \texttt{otype} is the type of the objective function (\texttt{ln} is linear, \texttt{nq} is nonconvex quadratic), \texttt{error} is the quantity $\norm{x^{k+1}-x^k}$, \texttt{iter} is the number of iterations, and \texttt{time} is the CPU time in seconds; $\mu$ and $\rho$ are the penalty and the regularization parameters in Algorithm \ref{alg:A2}, respectively.  The symbol $\#$ indicates that \textsc{fmincon} exceeds the limit time $T_{\max}= 4$ hours.

\subsection{Example 2}\label{exam:MPEC}
This example illustrates an application of Algorithm \ref{alg:A2} to solve mathematical programs with complementarity constraints presented in Section \ref{sec:examples}:
\begin{equation}\label{eq:aMPEC}
\left\{\begin{array}{cl}
\displaystyle\min_{x, y} & f(x, y)\\
\textrm{s.t.} &A x + By \leq a,\\
              &x^T(Cx + Dy + e ) = 0, ~ Cx + Dy + e \geq 0, ~x \geq 0,\\
              &x\in \Omega_x, ~ y\in\Omega_y,
\end{array}\right.\tag{MPCC}
\end{equation}
where $x\in\mathbf{R}^{n_x}$ is decision variable, $y\in\mathbf{R}^{n_y}$ is parameter, $f$ is convex with respect to $x$ and $y$, $A$, $B$, $C$ and $D$ are given consistent matrices, $a$ and $e$ are given consistent vectors, and $\Omega_x$, $\Omega_y$ are two convex sets in $\mathbf{R}^{n_x}$ and $\mathbf{R}^{n_y}$, respectively.

As in Example \ref{ex:motivating_exam2} of Section \ref{sec:examples}, we use a slack variable $z$ for $Cx + Dy + e \geq 0$, the complementarity condition of \eqref{eq:aMPEC} is expressed equivalently to
\begin{equation}\label{eq:CPcond}
x \geq 0, ~ z \geq 0, ~ x^Tz \leq 0, ~ Cx + Dy - z + e = 0. 
\end{equation}
Let us define a new variable $w = (x^T, y^T, z^T)^T \in \mathbf{R}^{n_w}$ with $n_w = 2n_x + n_y$, and denote by
$u(w) := \norm{(x,y,z)}^2$ and $v(w) := \norm{x-z}^2 + \norm{y}^2$, the third condition of \eqref{eq:CPcond} is equivalent to a DC constraint $u(w) - v(w) \leq 0$. Note that $u$ is strongly convex with parameter $\rho^u = 2$ and $v$ is only convex (not strongly convex).
We also define 
\begin{equation}\label{eq:omega_set}
\Omega_w := \left\{\begin{matrix} w = (x^T, y^T, z^T)^T \in \mathbf{R}^{n_w} ~|~ Ax + By \leq a, ~Cx + Dy - z + e = 0, \\
             x\in\Omega_x, ~y\in\Omega_y, ~x \geq 0, ~z\geq 0
\end{matrix}\right\}.  
\end{equation}
Since $\Omega_x$ and $\Omega_y$ are convex, and the remaining constraints are linear, $\Omega_w$ is convex in $\mathbf{R}^{n_w}$.

Problem \eqref{eq:aMPEC} is reformulated as
\begin{equation}\label{eq:nlp_prob_exam2}
\left\{\begin{array}{cl}
\displaystyle\min_{w\in\mathbf{R}^{n_w}} & f_w(w) := f(x, y)\\
\textrm{s.t.} &u(w) - v(w) \leq 0,\\
              &w \in \Omega_w,          
\end{array}\right.
\end{equation}
which coincides with \eqref{eq:nlp_prob}.

In this example, we implement Algorithm \ref{alg:A2} for solving three problems $\texttt{P}_7$, $\texttt{P}_9$ and $\texttt{P}_{10}$ in \cite{Facchinei1999}[problems 7, 9 and 10, respectively]. 
The parameter $\mu_k$ is fixed to $\mu = 10^{-1}$. To solve the convex subproblems \ref{eq:convex_subprob_mu} we also use the CVX package with  the Sedumi solver. 
For a given tolerance $\varepsilon = 10^{-6}$, the computational results are presented in Table \ref{tb:example_02}, which closely approximate to the results given in \cite{Facchinei1999}.
\begin{table}
\begin{center}
\caption{Computational results of Algorithm \ref{alg:A2} for \eqref{eq:aMPEC}.}\label{tb:example_02}
{\footnotesize
\begin{tabular}{|c|c|c|c|c|c|c|c|}\hline\hline
\multicolumn{1}{|c}{$\textrm{N}^o$} & \multicolumn{5}{|c|}{Problem Information} & \multicolumn{2}{|c|}{Algorithm \ref{alg:A2}} \\ \hline
& $[\texttt{m, n, l}]$ & $x^0$ & $f^{*}$ & \texttt{error} & \texttt{feasgap} & \texttt{iter} & \texttt{time} \\  \hline
$\texttt{P}_7$ &(2,2,6)& (40, 40) & $64.999$ & $7\times 10^{-7}$ & $5\times 10^{-11}$ & 9 & 9.91 \\ \hline
$\texttt{P}_9$ & (2,2,2) & (0, 0) & $7.095\times 10^{-12}$ & $1\times 10^{-5}$ & $4\times 10^{-15}$ & 18 & 13.00 \\ \hline
$\texttt{P}_9$ & - & (10, 0) & $1.351\times 10^{-11}$ & $2\times 10^{-5}$ & $1\times 10^{-10}$ & 18 & 12.82 \\ \hline
$\texttt{P}_9$ & - & (5, 5) & $1.294\times 10^{-11}$ & $2\times 10^{-5}$ & $1\times 10^{-10}$ & 18 & 12.83 \\ \hline
$\texttt{P}_9$ & - & (0, 10) & $1.229\times 10^{-11}$ & $2\times 10^{-5}$ & $1\times 10^{-11}$ & 18 & 12.91 \\ \hline
$\texttt{P}_9$ & - & (10, 10) & $2.597\times 10^{-11}$ & $8\times 10^{-6}$ & $1\times 10^{-11}$ & 19 & 13.56 \\ \hline
$\texttt{P}_{10}$ & (4,4,12) & $(5,5,15,15)$ & $-6600$ & $3\times 10^{-5}$ & $3\times 10^{-8}$ & 17 & 15.76 \\ \hline
\end{tabular}}
\end{center}
\end{table}
The solutions reported by Algorithm \ref{alg:A2} for $\texttt{P}_7$, $\texttt{P}_9$ and $\texttt{P}_{10}$ are
\begin{eqnarray*}
&&x^{*}_{\textrm{P}_7} = (25.00125, 30.00000)^T, ~~x^{*}_{\textrm{P}_9}=(10, 5)^T\\
&&\text{and}~ x^{*}_{\textrm{P}_{10}} = (7.515728, 3.77360, 11.48427, 17.22640)^T, 
\end{eqnarray*}
respectively.
Algorithm \ref{alg:A1} failed in this case because the set of interior points $\textrm{int}D$ of the feasible set $D$ is empty. 

\section{Conclusion}
The main aim of this paper is to investigate the relation between sequential convex programming (SCP) \cite{Lewis2008,Quoc2009b} and DC programming \cite{An2005,An1999,Pham1998}. We have provided a variant of the SCP algorithm for finding local minimizers of a nonconvex programming problem with DC constraints.
We have proved a global convergence theorem for this particular algorithm. Then we have addressed some extensions and proposed a relaxation technique to handle possibly inconsistent linearizations. 
Although finding a DC decomposition of a certain DC function is in general still a hard problem, in some applications (as we have shown in the examples) it is available or easy to compute. We have not concentrated on the local convergence. However, under mild assumptions, it had been proved in \cite{Quoc2009b} that the SCP method converges linearly to a KKT point of the original problem. Applications to nonconvex quadratic programming problems as well as mathematical programming problems with complementarity constraints have been presented through two numerical examples.

\vskip 0.2cm
{\small
\noindent{\textbf{Acknowledgments.}}

This research was supported by Research Council KUL: CoE EF/05/006 Optimization in Engineering(OPTEC), GOA AMBioRICS, IOF-SCORES4CHEM, several PhD/postdoc \& fellow grants; the Flemish Government via FWO: PhD/postdoc grants, projects G.0452.04, G.0499.04, G.0211.05, G.0226.06, G.0321.06, G.0302.07, G.0320.08 (convex MPC), G.0558.08 (Robust MHE), G.0557.08, G.0588.09, research communities (ICCoS, ANMMM, MLDM) and via IWT: PhD Grants, McKnow-E, Eureka-Flite+EU: ERNSI; FP7-HD-MPC (Collaborative Project STREP-grantnr. 223854), Contract Research: AMINAL, and Helmholtz Gemeinschaft: viCERP; Austria: ACCM, and the Belgian Federal Science Policy Office: IUAP P6/04 (DYSCO, Dynamical systems, control and optimization, 2007-2011).

}

\bibliographystyle{plain}

\begin{thebibliography}{99}
\bibitem{An2005} An, L.T.H. and Pham, D.T.: The DC (difference of convex functions) programming and DCA revisited with DC models of real world nonconvex optimization problems. Ann. Oper. Res., \textbf{133}, 23--46 (2005).

\bibitem{An1999} An, L.T.H., Tao, P.D. and Muu, L.D.: Exact penalty in DC programming. Vietnam J. Math., \textbf{27}(2), 169--178 (1999).

\bibitem{Bock1986} Bock, H. and Schl\"{o}der, J.: Recent progress in the development of algorithm and software for large-scale parameter estimation problems in chemical reaction systems. \textit{In}: Automatic Control in Petrol, Petrochemical and Desalination Industries. Kotobh, P. (ed.) (1986).

\bibitem{Boyd2004} Boyd, S. and Vandenberghe, L.: Convex Optimization. University {P}ress (2004).

\bibitem{Clarke1990} Clarke, F.H.: Optimization and Nonsmooth Analysis. SIAM (Society for Industrial and Applied Mathematics),
Philadelphia (1990).

\bibitem{Facchinei1999} Facchinei, F., Jiang, H. and Qi, L.: A smoothing method for mathematical programs with equilibrium constraints. Math. Program., \textbf{85}, 107--134 (1999).

\bibitem{Floudas2007} Floudas, C.A. and Pardalos, P.M.: A Collection of Test Problems for Constrained Global Optimization Algorithms. Springer Verlag, New York (2007).

\bibitem{Diehl2002b} Diehl, M., Bock, H., Schl\"{o}der, J., Findeisen, R., Nagy, Z. and Allg\"{o}wer, F.: Real-time optimization and nonlinear model predictive control of processes governed by differential-algebraic equations. J. Proc. Contr., \textbf{12}, 577--585 (2002).

\bibitem{Diehl2009c} Diehl, M., Ferreau, H. J. and Haverbeke, N.: Efficient numerical methods for nonlinear MPC and moving horizon estimation. \textit{In}: Nonlinear model predictive control. Magni, L., Raimondo, M. and Allg\"{o}wer, F. (eds.).  Springer, \textbf{384}, 391--417 (2009). 

\bibitem{Garces2008} Garc\'{e}s, R., Gomez, W. B. and  Jarre, F.: Two theoretical results for sequential semidefinite programming. Optimization online (http://www.optimization-online.org/DB\_HTML/2007/11/1823.html), 1--16 (2008).

\bibitem{Hiriart-Urruty1986} Hiriart-Urruty, J.B.: Generalized differentiability, duality and optimization for problems dealing with difference of convex functions, chapter: Convexity and Duality in Optimization,  pages 37--70. Springer-Verlag (1986).

\bibitem{Hiriart-Urruty1993} Hiriart-Urruty, J.B. and Lemar\'{a}chal, C.: Convex Analysis and Minimization Algorithms, volume 1\&2. Springer-Verlag (1993).

\bibitem{Horst1999} Horst, R. and Thoai, N.V.: DC programming: overview. J. Optim. Theory Appl., \textbf{103}, 1--43 (1999).

\bibitem{Horst2000} Horst, R., Pardalos,  P.M. and Thoai, N.V.: Introduction to Global Optimization. Kluwer Academic (2000).

\bibitem{Kanzow2005} Kanzow, C., Nagel, C., Kato, H. and Fukushima, M.: Successive linearization methods for nonlinear semidefinite programs. Comput. Optim. Appl. \textbf{31}(3), 251--273 (2005).

\bibitem{Klatte2001} Klatte, D. and Kummer, B.: Nonsmooth Equations in Optimization: Regularity, Calculus, Methods and Applications. Springer-Verlag (2001).
 
\bibitem{Laghdir2005} Laghdir, M.: Optimality conditions in DC constrained optimization. ACTA Mathematica Vietnamica, \textbf{30}(5), 169--179  (2005).

\bibitem{Lewis2008} Lewis, A.S. and Wright, S.J.: A proximal method for composite minimization. http://arxiv.org/abs/0812.0423, 1--32 (2008).

\bibitem{Luo1996} Luo, J.Q., Pang, J.S. and Ralph, D.: Mathematical Programs with Equilibrium Constraints. Cambridge University Press (1996).

\bibitem{Nocedal2006} Nocedal, J. and Wright, S.J.: Numerical Optimization. Springer Series in Operations Research and Financial Engineering. Springer, $2^{\textrm{nd}}$ edition (2006).
  
\bibitem{Ostrowski1966} Ostrowski, A.M.: Solutions of Equations and Systems of Equations. Academic Press, New York (1966).

\bibitem{Outrata1999} Outrata, J., Kocvara, M. and Zowe, J.: Nonsmooth Approach to Optimization Problems with Equilibrium. Kluwer Academic Publisher, Dordrecht, The Netherlands (1999).

\bibitem{Pham1998} Pham, D.T. and Le Thi, H.A.: A DC optimization algorithms for solving the trust region subproblem. SIAM J. Optimiz., \textbf{8}, 476--507 (1998).

\bibitem{Quoc2009b} Quoc, T.D. and Diehl, M.: Local convergence of sequential convex programming for nonconvex optimization. 
 \textit{In}: Diehl, M., Glineur, F., Jarlebring, E. and Michiels, M. (eds.): Recent Advances in Optimization and its Applications in Engineering. Springer-Verlag (2010).

\bibitem{Rockafellar1997} Rockafellar, T. R. and Wets, R. J-B.: Variational Analysis. Springer-Verlag, New York (1997).

\bibitem{Smola2005} Smola, A. J., Vishwanathan, S.V.N. and Hofmann, T.: Kernel methods for missing variables. In Proc. of the $10^{\text{th}}$ International Workshop on Artiﬁcial Intelligence and Statistics, AISTATS’05, Ghahramani, Z. and Cowell, R. (Eds.) (2005). 

\bibitem{Sripertumbudur2007} Sriperumbudur, B. K., Torres, D. A. and Lanckriet, G.R.G.: Sparse eigen methods by DC programming. In: Proc. of the $24^{\text{th}}$ Annual International Conference on Machine Learning (2007).

\bibitem{Sriperumbudur2009} Sriperumbudur, B.K. and Lanckriet, G.R.G.: On the convergence of the concave-convex procedure. Neural Information Processing Systems (NIPS) (2009).

\bibitem{Tuy1997} Tuy, H.: Convex Analysis and Global Optimization. Kluwer Academic Publishers (1997).

\bibitem{Yuille2003} Yuille, A. L. and Rangarajan, A.: The concave-convex procedure. Neural Comput., \textbf{15}, 915--936 (2003).

\end{thebibliography}

\end{document}